\newtheorem{thm}{Theorem}
\newtheorem{lem}{Lemma}
\begin{document}
\date{\empty}

\title{Products and tensor products of graphs and homomorphisms} 

\author{Izak Broere   \\ 
Department of Mathematics and Applied Mathematics, University of Pretoria  \\
         \and
Johannes Heidema \\
Emeritus, Department of Mathematical Sciences, University of South Africa \\
}   

\maketitle 
\noindent 
Comments:   12 pages \\ 
Subjects:   Combinatorics (math.CO); Discrete Mathematics (cs.DM)  \\  
MSC-classes:  05C76 (Primary), 05C25 (Secondary)  \\
Corresponding author: Izak Broere \\ 
Email: izak.broere@up.ac.za \\
Telephone: +27-12-420-2611 \\ 
Fax: +27-12-420-3893 

\begin{abstract}  
We introduce and study, for a process P delivering edges 
on the Cartesian product of the vertex sets of a given set of graphs, 
the P-product of these graphs, thereby generalizing 
many types of product graph. 
Analogous to the notion of a multilinear map (from linear algebra), a P-morphism 
is introduced and utilised to define a P-tensor product of graphs, 
after which its uniqueness is demonstrated.  \\ 
Congruences of graphs are utilised to show a way to handle 
projections (being weak homomorphisms) in this context.     
Finally, the graph of a homomorphism and a  
P-tensor product of homomorphisms are introduced, 
studied, and linked to the P-tensor product of graphs. 
\end{abstract}

\section{Preliminaries}

\noindent 
At least half a dozen types of product graph have been introduced and studied.  
One of these types, the ``direct product'', is known under at least ten different 
names (\cite{HIK11}, p.\ 36). 
In the interest of some consolidation towards the identification 
and study of commonalities in the spectrum of possible products, we introduce the 
general notion of a ``$\mathbf{P}$-product'' of graphs, covering a number of cases.  
One of the names of the direct product is ``tensor product'', 
a well-known concept in linear algebra and category theory -- it can 
be defined by the tensor product of matrices. 
Rosinger \cite{Ro08} generalizes the notion of tensor product from 
vector spaces to structures with arbitrary binary operations, 
and even beyond those to structures with arbitrary ``generators'' 
on the underlying sets, with then binary operations as a special case 
of generators.  
We show, i.a., that the $\mathbf{P}$-product -- and hence all its 
instances -- has all the attributes required of a general ``tensor product''.  
The notion of a $\mathbf{P}$-(tensor) product and its role can be transposed from 
graphs to (sets of) graph \textit{homomorphisms}, with analogous results. 

For those notions on graphs in general not defined here, we refer the reader to \cite{Di10}.  
The Handbook of Product Graphs \cite{HIK11} is a comprehensive treatise 
on many types of product of mainly finitely many, mainly finite graphs.  
Except when explicitly stated otherwise, 
all graphs considered here are simple, undirected and unlabelled, 
and have non-empty vertex sets.   
There is, in general, no upper bound on the cardinalities of sets we use; 
neither on the vertex sets (and edge sets) of graphs we use nor on index 
sets (used, amongst others, to describe a set of graphs). 
More definitions, especially of the concepts ``loop-allowing graph'', 
``loopy graph'', and ``congruences''  
on such graphs, which are new in graph theory, will be given in Section 4 --  
they were introduced and studied in \cite{BHP15}.  

A graph $G$ with vertex set $V$ and edge set $E$ will typically be denoted by    
$G = (V, E)$; when we are dealing with different graphs, we shall use the notation 
$V_G$ for $V$ when the description of $G$ contains no subscripts, and 
$V(G)$ otherwise, similarly we shall use $E_G$ or $E(G)$ for $E$. 
A \textit{(graph) homomorphism} is an edge preserving mapping from the vertex set of a graph 
into the vertex set of a graph. 
When two graphs are isomorphic, one will be called a \textit{clone} of the other.  
We shall use the abbreviation ``iff'' for the logical connective 
``if and only if''.  

If $x$ and $y$ are elements of some set, we shall denote an ordered pair 
formed by them by $(x,y)$ and the unordered pair $\{ x,y \}$ formed by them by $xy$, 
especially when this pair is an (undirected) edge in some graph.   
We use the symbol $\bigsqcup$ for the \textit{disjoint} union of sets and graphs. 
The next definition is a standard definition from set theory. 

\textbf{The Cartesian product of sets:}  
The \textit{Cartesian product} $\prod_{i \in I} V_i$ \textit{of a collection of sets} 
$\{V_i \mid i \in I \}$ is the set of all functions $f$ from $I$ 
to $\bigsqcup_{i \in I} V_i$ which satisfy the requirement that  
$f(i) \in V_i$ for all $i \in I$.

\section{The $\mathbf{P}$-product of graphs}

\noindent 
In their approach to the classification of products, the authors of 
\cite{HIK11} require (on p.\ 41) that ``The edge set of the product 
should be determined by some definite rule.'' 
We introduce a notion of ``rule'' corresponding to theirs for our situation in this section. 
In the sequel, let ${\cal G} := \{ G_i \mid i \in I \}$ be any given set of graphs, 
and let $V := \prod_{i \in I} V(G_i)$ be the Cartesian product of their vertex sets.  

\textbf{A (product) process $\mathbf{P}$:}
Throughout this paper $\mathbf{P}$ denotes (the description or 
definition of) a process or rule that 
delivers a set of unordered pairs of vertices of $V$.  
This process must be only dependent on the index set $I$ and its elements
(including possibly set-theoretical structure on $I$, such as 
a distinguished subset of $I$ or an order relation on $I$), and the 
vertex sets $V(G_i)$ and edge sets 
$E(G_i)$ of the graphs $G_i$ and must have the property that it delivers  
a unique set of unordered pairs once ${\cal G}$ is given; this set of unordered 
pairs will be denoted by $E\mathbf{P}$. 

\textbf{The  $\mathbf{P}$-product of graphs:} 
The  $\mathbf{P}$\textit{-product of the set $\cal G$ of graphs} is the graph 
$\mathbf{P}({\cal G}) := \mathbf{P}_{i \in I} G_i : = (V, E\mathbf{P})$ 
with the Cartesian product $V$ as vertex set  
of which the edge set $E\mathbf{P}$ is this set of unordered pairs 
delivered by $\mathbf{P}$. 

\textbf{Examples of $\mathbf{P}$-products:}   
In each example below we assume that a set $\cal G$ of graphs is given 
and we describe a process $\mathbf{P}$ that leads in each of the first four cases 
to a well-known product of graphs.   
In \cite{HIK11} (p.\ 35) the products in the first three 
examples are called the ``three fundamental products'' --   
the reader is invited to check that our terminology 
is in line with theirs by applying our definitions to that situation.   \\ 
$\bullet$ 
\textbf{The Cartesian product of graphs:}  
For the \textit{Cartesian product} {\raisebox{-0.56mm}{\Large \bf $\Box$}}$_{i \in I} G_i$ 
of $\cal G$ (or {\raisebox{-0.56mm}{\Large \bf $\Box$}}${\cal G}$) 
we stipulate for the process $\mathbf{P}$ that two vertices $f$ and $g$
are adjacent in {\raisebox{-0.56mm}{\Large \bf $\Box$}}$_{i \in I} G_i$ 
iff there is an index $j \in I$ (in general depending on $f$ and $g$) 
such that $f(j)g(j) \in E(G_j)$ while $f(i) = g(i)$ for all $i \in I$ with $i \neq j$.    \\ 
$\bullet$  
\textbf{The direct product of graphs:}  
For the \textit{direct product} {\Large$\times$}$_{i \in I} G_i$ 
(or {\Large$\times$}${\cal G}$) 
we stipulate for the process $\mathbf{P}$ that two vertices $f$ and $g$
are adjacent in {\Large$\times$}$_{i \in I} G_i$ iff for every index $i \in I$ 
we have that $f(i)g(i) \in E(G_i)$.  
The edge set of the direct product {\Large$\times$}$_{i \in I} G_i$ 
will also be denoted by $E_{\times}$.   \\ 
$\bullet$  
\textbf{The strong product of graphs:}  
For the \textit{strong product} 
{\raisebox{-.3ex}{\Large\bf$\Box$}\hspace{-3.9mm}{\Large${\times}$}}$_{i \in I} G_i$ 
(or {\raisebox{-.3ex}{\Large\bf$\Box$}\hspace{-3.9mm}{\Large${\times}$}}${\cal G}$) of $\cal G$ 
we stipulate for the process $\mathbf{P}$ that two vertices $f$ and $g$
are adjacent in 
\mbox{{\raisebox{-.3ex}{\Large\bf$\Box$}\hspace{-3.9mm}{\Large${\times}$}}$_{i \in I} G_i$} 
iff there is a proper subset $K \subset I$ 
(in general depending on $f$ and $g$) such that for all 
$j \in I \setminus K$, 
$f(j)g(j) \in E(G_j)$, while for all $k \in K$, $f(k) = g(k)$.    \\ 
$\bullet$  
\textbf{The lexicographic product of graphs:}   
For this type of product the process $\mathbf{P}$ is co-determined by 
an order on the factors $G_i$.  
We assume that $\langle I, < \rangle$ is well-ordered, 
so is order isomorphic to some fixed ordinal $\langle \kappa, \in \rangle$.  
For the \textit{lexicographic product} $\bigcirc_{i \in I} G_i$ 
(or $\bigcirc {\cal G}$) of $\cal G$ we then stipulate for the process 
$\mathbf{P}$ that two vertices $f$ and $g$ are adjacent in $\bigcirc_{i \in I} G_i$ 
iff $J := \{ j \in I \mid f(j)g(j) \in E(G_j) \} \neq \emptyset$ 
and, when $m$ is then the $<$-minimum element of $J$, 
for every $k \in I$ such that $k < m$ (if such exist) $f(k) = g(k)$.      \\
$\bullet$  
\textbf{The $D$-product of graphs:} 
This is new product, as far as we know.  
Let $D$ be a fixed (distinguished) non-empty subset of the index set $I$.  
For the \textit{$D$-product} {\large{$\bigcirc$}}$\!\!\!\!\!\! {\mathbf D}_{i \in I} G_i$ 
(or {\large{$\bigcirc$}}$\!\!\!\!\!\! {\mathbf D} \, \, \cal G$)   
we stipulate for the process $\mathbf{P}$ that two vertices $f$ and $g$
are adjacent in {\large{$\bigcirc$}}$\!\!\!\!\!\! {\mathbf D}_{i \in I} G_i$ iff 
for every index $j \in D$ we have $f(j)g(j) \in E(G_j)$.  

\textbf{The triples of sets $J(fg), K(fg)$ and $L(fg)$:}  
Given a process $\mathbf{P}$ delivering the edge set $E\mathbf{P}$ 
of ${\mathbf{P}(\cal G})$ for a set of graphs $\cal G$, 
every unordered pair $\{ f, g \}$ (with $f,g \in V$) and 
in particular every edge $fg \in E\mathbf{P}$ determines the 
following three subsets of the index set $I$:  \\ 
$J(fg) : = \{ j \in I \mid f(j)g(j) \in E(G_j)  \}$;  \\ 
$K(fg) : = \{ k \in I \mid f(k) = g(k) \}$; and \\ 
$L(fg) : = \{ l \in I \mid f(l)g(l) \not\in E(G_l) \mbox{ and } f(l) \neq g(l)\} 
= I \setminus (J(fg) \cup K(fg))$.  

Conversely, if for any vertices $f,g \in V$ we know 
that  $fg \in E\mathbf{P}$ and we know the three sets of 
indices $J(fg)$, $K(fg)$, and $L(fg)$ -- even the first two 
will do -- then we know for every $i \in I$ whether 
$f(i)g(i) \in E(G_i)$, or $f(i) = g(i)$, or neither.  
Different choices of processes $\mathbf P$, leading to different decisions on 
whether for $f,g \in V$ we have $fg \in E\mathbf{P}$ or not, 
may entail different set-theoretical constraints on the sets 
$J(fg)$, $K(fg)$, and $L(fg)$ -- constraints on 
their cardinalities, on whether they are empty or not, 
constraints of inclusion, etc.  
For a given $\mathbf P$ we call these constraints the \textit{$\mathbf P$-constraints} on 
the three index sets.  
For our five examples of $\mathbf P$-products the $\mathbf P$-constraints are, 
respectively, the following for every edge $fg \in E{\mathbf P}$:  \\
{\raisebox{-0.56mm}{\Large \bf $\Box$}}: For the Cartesian product   \\  
\hspace*{.65cm} $|J(fg)| = 1$, i.e., $J(fg)$ is a singleton, say $\{ j \} \subseteq I$;    \\
\hspace*{.65cm} $K(fg) = I \setminus J(fg)$, say $I \setminus \{ j \}$; and    \\ 
\hspace*{.65cm} $L(fg) = \emptyset $.   \\ 
{\Large$\times$}: For the direct product   \\  
\hspace*{.65cm} $J(fg) = I$;  $K(fg) = L(fg) = \emptyset$.     \\ 
{\raisebox{-.3ex}{\Large\bf$\Box$}\hspace{-3.9mm}{\Large${\times}$}}:  
For the strong product  \\
\hspace*{.65cm} $J(fg) \neq \emptyset$;    \\
\hspace*{.65cm} $K(fg) = I \setminus J(fg)$; and    \\ 
\hspace*{.65cm} $L(fg) = \emptyset$.    \\  
$\bigcirc$:  For the lexicographic product the well-ordering 
$\langle I,< \rangle$ is fixed once and for all, and   \\ 
\hspace*{.65cm} $J(fg) \neq \emptyset$;   \\
\hspace*{.65cm} $K(fg) \supseteq \{ k \in I \mid k < m \}$, where  $m$ is 
the $<$-minimum of $J(fg)$; and  \\ 
\hspace*{.65cm} $L(fg) = I \setminus (J(fg) \cup K(fg))$.     \\ 
{\large{$\bigcirc$}}$\!\!\!\!\!\! {\mathbf D}$ :  For the $D$-product 
the non-empty set $D \,\,(\subseteq I)$  is fixed once and for all, and \\ 
\hspace*{.65cm} $J(fg) \supseteq D$;   \\
\hspace*{.65cm} $K(fg) \subseteq I \setminus J(fg)$; and    \\ 
\hspace*{.65cm} $L(fg) = I \setminus (J(fg) \cup K(fg))$.       
 
Furthermore, in all five these examples, $J(fg)$, $K(fg)$, and $L(fg)$ 
are pairwise disjoint (of course by definition) for every $fg \in E\mathbf{P}$.  
In fact, in all of these examples $\mathbf{P}$ induces ``$JKL$'' 
that satisfy the following \textit{universal constraints}:  \\ 
For all $fg \in E\mathbf{P}$, $J(fg)$, $K(fg)$, and $L(fg)$  are pairwise 
disjoint, their union is $I$, and $J(fg) \neq \emptyset $.  

In \cite{HIK11} (pp.\ 41 -- 43) an \textit{incidence function} 
$\delta$ is introduced for any graph $G$ (not necessarily 
a product graph) as $\delta: V_G \times V_G \rightarrow \{ 1, \Delta ,0 \}$.  
The incidence functions of two graphs then determine the incidence 
function -- and hence the adjacency relation -- on their product of a certain type.  
It should be clear that the three values of $\delta$, 
$\delta(g,g') = 1$ if $g \neq g'$ and $gg' \in E_G$; 
$\delta(g,g') = \Delta$ if $g = g'$; and 
$\delta(g,g') = 0$ if $g \neq g'$ and $gg' \not\in E_G$, 
function analogously to, respectively, our three subsets 
$J(fg)$; $K(fg)$; and $L(fg)$ of $I$ in determining 
the adjacency relation in a $\mathbf P$-product 
of any number of graphs.  

We have seen that every product process $\mathbf P$ induces a triple of functions 
$$ J, K, L : E{\mathbf P} \rightarrow {\cal P}(I); fg \mapsto J(fg), K(fg), L(fg)$$ 
such that for every $fg \in E{\mathbf P}$, the triple $J(fg)$, $K(fg)$, and $L(fg)$  
satisfies the $\mathbf P$-constraints.  
It may happen that, conversely, the existence of $J, K, L$ triples 
of subsets of $I$ satisfying certain constraints 
completely determines $E{\mathbf P}$.  

{\bf Constraint-determined product processes:}
We shall call the product process $\mathbf P$ \textit{constraint-determined} 
iff the following holds:  \\ 
There exists a set $C$ of constraints of the relevant nature 
(including the universal ones) on triples 
$J, K, L : \{ \{ f, g \} \mid f, g \in V \mbox{ and } f \neq g \} \rightarrow {\cal P}(I)$ 
such that for every unordered pair $\{ f, g \} , f \neq g$, of elements 
of $V$, $fg \in E{\mathbf P}$ iff there exists 
some triple $J\{ f,g \}, K\{ f,g \},L\{ f,g\}$ of subsets of $I$  
satisfying $C$.  
In this case we could then legitimately consider $C$ to be (equivalent to) 
the $\mathbf P$-constraints.  
Careful examination of their definitions and our exposition of the constraints 
for our five examples of product processes verifies the next result.   
\vspace{.165cm}

\begin{lem}\label{lemma1} 
All the product processes {\raisebox{-0.56mm}{\Large \bf $\Box$}}, 
{\Large$\times$}, {\raisebox{-.3ex}{\Large\bf$\Box$}\hspace{-3.9mm}{\Large${\times}$}}, 
$\bigcirc$,  and {\large{$\bigcirc$}}$\!\!\!\!\!\! {\mathbf D}$
are constraint-determined.  
\end{lem}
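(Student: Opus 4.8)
The plan is to handle the five processes one by one, in each case writing down an explicit constraint set $C$ and verifying the biconditional in the definition of ``constraint-determined''. Two preliminary remarks organize the whole argument. First, since in this section all graphs are simple and hence loopless, for every index $i\in I$ and every pair $f\neq g$ in $V$ exactly one of the three alternatives ``$f(i)g(i)\in E(G_i)$'', ``$f(i)=g(i)$'', ``neither of these'' holds; consequently the canonical sets $J(fg)$, $K(fg)$, $L(fg)$ always form a partition of $I$, so the universal constraints are automatically met by the canonical triple of any pair $f\neq g$ with $J(fg)\neq\emptyset$. Second, a constraint ``of the relevant nature'' may refer to exactly the data that a process $\mathbf P$ is permitted to use, namely the sets $V(G_i)$, $E(G_i)$, the values of $f$ and $g$, and any fixed set-theoretic structure on $I$; in particular I will always include in $C$ the \emph{definitional} constraints ``$j\in J\{f,g\}$ iff $f(j)g(j)\in E(G_j)$'', ``$k\in K\{f,g\}$ iff $f(k)=g(k)$'', and ``$l\in L\{f,g\}$ iff $f(l)g(l)\notin E(G_l)$ and $f(l)\neq g(l)$'', together with the universal constraints.

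The key reduction is then immediate: including the definitional constraints forces any triple satisfying $C$ to equal the canonical triple $(J(fg),K(fg),L(fg))$, so ``there exists a triple satisfying $C$'' is equivalent to ``the canonical triple of $\{f,g\}$ satisfies the remaining, purely combinatorial, constraints of $C$''. Hence for each process it suffices to produce a list of constraints on cardinalities, emptiness, inclusions, and (where needed) the fixed structure on $I$ such that the canonical triple satisfies the list exactly when $fg$ lies in the product's edge set. This is precisely the information already laid out in the excerpt as the $\mathbf P$-constraints of the five examples, so what remains is a direct comparison of each defining adjacency clause with its listed constraints.

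Carrying this out: for the direct product, ``$f(i)g(i)\in E(G_i)$ for every $i\in I$'' is literally ``$J(fg)=I$'', equivalently ``$K(fg)=L(fg)=\emptyset$''. For the Cartesian product, ``there is a unique $j$ with $f(j)g(j)\in E(G_j)$, and $f(i)=g(i)$ for every $i\neq j$'' is ``$|J(fg)|=1$ and $K(fg)=I\setminus J(fg)$'' (from which $L(fg)=\emptyset$ follows by the partition property). For the strong product, a proper subset $K\subset I$ with the stated property exists iff the least candidate $I\setminus J(fg)$ is itself proper and works, i.e.\ iff ``$J(fg)\neq\emptyset$, $K(fg)=I\setminus J(fg)$, $L(fg)=\emptyset$''. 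For the lexicographic product the fixed well-order $\langle I,<\rangle$ is part of the set-theoretic structure on $I$ and hence usable by $C$; here the defining clause becomes ``$J(fg)\neq\emptyset$ and $\{k\in I\mid k<\min J(fg)\}\subseteq K(fg)$'', while $L(fg)$ need not be empty. For the $D$-product the fixed non-empty $D\subseteq I$ is likewise usable by $C$, and ``$f(j)g(j)\in E(G_j)$ for every $j\in D$'' becomes ``$D\subseteq J(fg)$''. In each case, take $C$ to consist of the definitional constraints, the universal constraints, and the combinatorial clauses just displayed; the reduction above then yields the required biconditional.

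The one place where care is genuinely needed — and the only obstacle worth flagging — is making sure the clause ``there exists a triple satisfying $C$'' neither over- nor under-generates edges. Over-generation cannot occur because the definitional constraints pin the satisfying triple down to the canonical one, and under-generation cannot occur because every non-definitional clause of $C$ is a faithful transcription of the process's rule. The subtleties that remain are bookkeeping: for the lexicographic and $D$-products one must note that it is legitimate for $C$ to mention the fixed well-order, respectively the fixed set $D$ (this is exactly the ``set-theoretical structure on $I$'' already sanctioned in the definition of a product process), and in the lexicographic case one must remember that $L(fg)$ can be non-empty, so the relevant constraint on $K$ is an inclusion, not the equality that appears for the Cartesian and strong products — coordinate-consistency nonetheless keeps the satisfying triple unique, so the reduction still applies.
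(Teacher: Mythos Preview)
Your proposal is correct and essentially follows the paper's approach: the paper offers no formal proof at all, merely the sentence ``Careful examination of their definitions and our exposition of the constraints for our five examples of product processes verifies the next result,'' and you have supplied exactly that careful examination. Your device of adjoining the \emph{definitional} constraints (forcing any satisfying triple to coincide with the canonical $(J(fg),K(fg),L(fg))$) is a sensible way to make precise what the paper leaves implicit in its somewhat loose phrase ``there exists some triple \ldots\ satisfying $C$''; without some such tie to $f$ and $g$ the existential clause would not depend on the pair at all, so the paper must intend something like this even though it does not say so.
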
 
\vspace{.43cm} 

{\bf Permutable product processes:}
Beyond the attribute that a $\mathbf P$ may have 
of being constraint-determined, we now want 
to define a more general attribute of $\mathbf P$ being ``permutable''.  
The process $\mathbf P$ is \textit{permutable} iff the following holds:  \\ 
Whenever $p : I \rightarrow I$ is a permutation (bijection) of $I$  
that respects whatever occasional fixed set-theoretical structure 
$\mathbf P$ has imposed upon $I$ (as for $\bigcirc$ and 
{\large{$\bigcirc$}}$\!\!\!\!\!\! {\mathbf D}$), 
then, for all $f,g \in V$, $p(f)p(g) \in E{\mathbf P}$ iff 
$fg \in E{\mathbf P}$.  
(Any permutation $p : I \rightarrow I$ induces a 
bijection again (naughtily) called  $p : V \rightarrow V$; $f \mapsto p(f)$; 
$(p(f))(i) = f(p(i))$.)
The class of permutable processes includes the class of constraint-determined 
processes:  
\vspace{.165cm}

\begin{lem}\label{lemma2}  
If product process $\mathbf P$ is constraint-determined, then $\mathbf P$ 
is permutable.  
\end{lem}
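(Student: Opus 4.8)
The plan is to trace membership in $E\mathbf{P}$ through the constraint set $C$ that witnesses constraint-determinedness, exploiting that $C$ can only ``see'' set-theoretical features of the index sets $J,K,L$ together with whatever fixed structure $\mathbf{P}$ has imposed on $I$ — and all such features are preserved by any structure-respecting permutation of $I$. Fix such a permutation $p\colon I\to I$, let $p\colon V\to V$ be the induced bijection $(p(f))(i)=f(p(i))$, and put $q:=p^{-1}$; note $f\ne g$ iff $p(f)\ne p(g)$, so $p$ carries unordered pairs of distinct vertices to such pairs, and note $V(G_{p(i)})=V(G_i)$ (this is already implicit in the claim that $p$ induces a bijection on $V$), so $G_{p(i)}$ and $G_i$ may be identified.

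The first step is a transport law for the canonical index sets $J(fg),K(fg),L(fg)$: for all distinct $f,g\in V$,
$$ J(p(f)p(g)) = q(J(fg)),\qquad K(p(f)p(g)) = q(K(fg)),\qquad L(p(f)p(g)) = q(L(fg)),$$
where $q(A):=\{q(a)\mid a\in A\}$. This is immediate from the definitions: $i\in J(p(f)p(g))$ iff $(p(f))(i)(p(g))(i)=f(p(i))g(p(i))\in E(G_i)=E(G_{p(i)})$ iff $p(i)\in J(fg)$ iff $i\in q(J(fg))$; the cases of $K$ and $L$ differ only in replacing ``$\in E(\cdot)$'' by ``$=$'' respectively ``$\notin E(\cdot)$ and $\ne$''.

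The second step is that $q$ — being itself a structure-respecting permutation of $I$ — preserves every property a constraint in $C$ can legitimately express. Indeed, for all $A,B\subseteq I$ we have $|q(A)|=|A|$, $q(A)=\emptyset\Leftrightarrow A=\emptyset$, $q(A)\subseteq q(B)\Leftrightarrow A\subseteq B$, $q(A\cup B)=q(A)\cup q(B)$, $q(A\cap B)=q(A)\cap q(B)$, $q(I)=I$, and $q$ maps a distinguished subset to itself and the $<$-least element of a set to the $<$-least element of its image (precisely because $p$, hence $q$, respects that structure). Consequently, if for a pair $\{f,g\}$ we call a triple of subsets of $I$ \emph{admissible} when it witnesses the constraint clause of the definition for $\{f,g\}$, then $q$ maps the admissible triples for $\{f,g\}$ bijectively onto the admissible triples for $\{p(f),p(g)\}$: the universal constraints and any cardinality-, emptiness- or inclusion-conditions are preserved by the identities just listed, any reference to the fixed structure is preserved because $q$ respects it, and any ``faithfulness'' requirement tying the triple to the pair — such as $J\{f,g\}\subseteq J(fg)$, or equality with the canonical triple — is preserved by the transport law of the first step.

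Combining the steps: $fg\in E\mathbf{P}$ iff there is an admissible triple for $\{f,g\}$, iff there is an admissible triple for $\{p(f),p(g)\}$ (forward via $q$, backward via $p=q^{-1}$, which is also structure-respecting), iff $p(f)p(g)\in E\mathbf{P}$; this is exactly permutability. I expect the only real obstacle to be making the phrase ``constraints of the relevant nature'' precise enough that the invariance claim of the second step is a theorem and not merely plausible; once $C$ is read as a conjunction of conditions each of which is either a Boolean combination of cardinality-, emptiness- and inclusion-assertions about $J,K,L$ and $I$, or an assertion about the fixed structure on $I$ alone, together with the faithfulness link to $\{f,g\}$, no step beyond the bookkeeping above remains.
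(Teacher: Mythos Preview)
Your argument is correct and follows essentially the same route as the paper's own proof: both hinge on the single observation that a structure-respecting permutation of $I$ preserves every set-theoretical predicate that a constraint in $C$ can express, so the existence of a $JKL$-triple satisfying the $\mathbf P$-constraints for $\{f,g\}$ is equivalent to the existence of one for $\{p(f),p(g)\}$. Your version is simply more explicit --- you write out the transport law $J(p(f)p(g))=q(J(fg))$ etc.\ and list the preserved predicates, whereas the paper compresses this into the sentence that ``the $p$-images of those triples determine the same $E\mathbf P$''; your closing caveat about making ``constraints of the relevant nature'' precise is exactly the point at which the paper, too, remains informal.
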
 

\begin{proof}
Consider a constraint-determined $\mathbf P$ and any 
permutation $p : I \rightarrow I$ respecting occasional $\mathbf P$-imposed 
structure on $I$.  
Then $p$ preserves every $\mathbf P$-constraint in the following sense:  \\ 
Every set-theoretical constraint on one, two, 
or three subsets of $I$ (like emptiness, non-emptiness, 
cardinality, set-theoretical difference, inclusion, etc.)\
holds intact between the $p$-images of those sets. 
Since these constraints determine adjacency in $E\mathbf P$  
by the \textit{existence} of $JKL$ triples, the 
$p$-images of those triples determine \textit{the same} $E\mathbf P$.  
So the bijection $p : V \rightarrow V$ establishes the isomorphism 
${\mathbf P}_{i \in I} G_i \cong {\mathbf P}_{i \in I} G_{p(i)}$.
\end{proof}
\vspace{.165cm} 

An immediate consequence is the following.  
\vspace{.165cm}

\begin{lem}\label{lemma3}  
Any graph product yielded by a permutable (and in particular by a 
constraint-determined) process is commutative and associative 
in every possible sense of those attributes.  
\end{lem}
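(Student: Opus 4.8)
The plan is to handle ``commutative'' and ``associative'' separately, in each case first pinning down the precise statement and then reducing it to permutability. For \emph{commutativity} the assertion is that $\mathbf{P}_{i\in I}G_i$ does not, up to isomorphism, depend on how the factors are indexed: for every permutation $p$ of $I$ that respects whatever fixed set-theoretical structure $\mathbf{P}$ has imposed on $I$ one has $\mathbf{P}_{i\in I}G_i \cong \mathbf{P}_{i\in I}G_{p(i)}$, which in the two-factor case specialises to $G_1\mathbf{P}G_2 \cong G_2\mathbf{P}G_1$. For \emph{associativity} the assertion is that, given a partition $I=\bigsqcup_{k\in K}I_k$ of the index set into blocks, the ``grouped'' product $\mathbf{P}_{k\in K}\bigl(\mathbf{P}_{i\in I_k}G_i\bigr)$ is isomorphic to the ``flat'' product $\mathbf{P}_{i\in I}G_i$ via the canonical flattening bijection $\Phi:\prod_{k\in K}\prod_{i\in I_k}V(G_i)\to\prod_{i\in I}V(G_i)$ of Cartesian products; in the three-factor case this yields $(G_1\mathbf{P}G_2)\mathbf{P}G_3 \cong G_1\mathbf{P}(G_2\mathbf{P}G_3) \cong \mathbf{P}_{i\in\{1,2,3\}}G_i$.

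Commutativity requires essentially no new work: it is exactly the isomorphism exhibited in the last line of the proof of Lemma~\ref{lemma2}. A permutation $p:I\to I$ respecting $\mathbf{P}$-imposed structure induces the bijection $p:V\to V$ with $(p(f))(i)=f(p(i))$, and permutability says precisely that $p(f)p(g)\in E\mathbf{P}$ iff $fg\in E\mathbf{P}$; hence $p$ is a graph isomorphism from $\mathbf{P}_{i\in I}G_i$ onto $\mathbf{P}_{i\in I}G_{p(i)}$. From this one reads off commutativity ``in every sense'' — for finitely or infinitely many factors, with or without repetitions among the $G_i$.

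For associativity I would first dispose of the constraint-determined case and then indicate why the general permutable case reduces to it. When $\mathbf{P}$ is constraint-determined with constraint set $C$, adjacency of two vertices in either the grouped or the flat product is expressed as the \emph{existence} of a $JKL$-triple of subsets of the relevant index set satisfying $C$. The point is that $\Phi$ is ``constraint-transparent'': for $F,F'$ in the grouped product with $f=\Phi(F)$, $f'=\Phi(F')$, the record of whether, for each $i$, $f(i)f'(i)\in E(G_i)$ or $f(i)=f'(i)$ or neither, is literally the same whether read on $I$ all at once or block-by-block on the $I_k$ and then on $K$. Since the $\mathbf{P}$-constraints are purely set-theoretical statements about $JKL$-triples (emptiness, non-emptiness, cardinality, inclusion, set difference, together with the universal constraints), they hold for the flat triple iff they hold for the corresponding grouped triples, so $\Phi$ preserves and reflects edges. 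For a merely permutable $\mathbf{P}$ one argues that any instance of associativity the notion is meant to cover can be brought, by first sorting the indices into their blocks (a permutation of $I$, to which Lemma~\ref{lemma2} applies) and then reading off the $JKL$-data block-by-block, into the form just treated.

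The main obstacle is precisely the associativity part for processes that are permutable but not constraint-determined: permutability only licenses re-indexing by bijections of a \emph{fixed} index set, whereas associativity intrinsically involves re-\emph{grouping} the index set, i.e.\ passing through iterated Cartesian products. So the real content is to make ``every possible sense'' of associativity precise and to check that each such sense is captured by the $JKL$-bookkeeping above together with the permutations supplied by Lemma~\ref{lemma2}; once the statement is pinned down this is routine, but it is the step that deserves care, and it explains why the lemma is stated only after Lemma~\ref{lemma2} is available.
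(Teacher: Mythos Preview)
The paper supplies no proof of Lemma~\ref{lemma3} at all: it is announced as ``an immediate consequence'' of Lemma~\ref{lemma2} and left without argument. Your treatment of commutativity matches this implicit reasoning exactly --- it is precisely the isomorphism $\mathbf{P}_{i\in I}G_i \cong \mathbf{P}_{i\in I}G_{p(i)}$ displayed in the last line of the proof of Lemma~\ref{lemma2}.

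For associativity you go well beyond what the paper offers, and you are right to do so. You correctly identify the real issue: permutability, as defined, concerns only bijections of a \emph{fixed} index set $I$, whereas associativity is a statement about \emph{regrouping} $I$ and comparing iterated products over different index sets. Nothing in the definition of ``permutable'' licenses this directly, and the paper simply does not address the point. Your sketch for the constraint-determined case via the ``constraint-transparency'' of the flattening bijection $\Phi$ is the natural approach, though the claim that $\mathbf{P}$-constraints transfer verbatim between flat and grouped $JKL$-triples is itself not automatic (a cardinality constraint like $|J|=1$ for the Cartesian product interacts with nesting in a way that must be checked, not merely asserted). Your closing caveat --- that the reduction for merely permutable $\mathbf{P}$ is ``the step that deserves care'' --- is well placed and, if anything, more scrupulous than the source, which treats both commutativity and associativity as equally self-evident.
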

\vspace{.43cm}  

Occasional provisos should not be forgotten.  
In the definition of the lexicographic product a fixed well-ordering 
$\langle I, < \rangle$ is assumed and the only permutation of 
$I$ preserving this order is $id_I$, the identity function 
on the set $I$. 
Similarly, for the $D$-product only permutations of $I$ 
that map the fixed distinguished set $D$ of indices 
onto itself should be allowed.

\section{$\mathbf P$-morphisms and the $\mathbf{P}$-tensor product of graphs}

\noindent 
In order to advance our study of the $\mathbf{P}$-product of graphs, 
we need the notion of a morphism from the set $V$ to a graph $H$ 
that is linked to the process $\mathbf{P}$. 

\textbf{$\mathbf{P}$-morphisms:}  
Whenever we have a set of graphs ${\cal G} = \{ G_i \mid i \in I \}$, 
a process $\mathbf{P}$ (delivering 
${\mathbf{P}(\cal G}) = (V, E\mathbf{P})$), and a graph $H$ available, we define  
a $\mathbf{P}$-\textit{morphism} to the graph $H$ as a function 
$\delta: V \rightarrow V_H$ such that $\delta$ is a homomorphism 
$\delta: {\mathbf{P}(\cal G}) \rightarrow H$; we may write 
$\delta: V \stackrel{\mathbf{P}}{\rightarrow} H$ to indicate this fact.   
We note already here that $\mathbf{P}$-morphisms in graph theory are 
analogous to \textit{multilinear mappings} in linear algebra. 
In Section 7 we shall clarify this remark. 

Next we define a tensor product of a set of graphs which is also  
linked to a given process $\mathbf{P}$.  
This definition follows the idea found in many algebra textbooks, 
especially in the context of the linear algebra of vector spaces; 
the one we follow in particular is the so-called ``universal property 
of tensor products'' given in Theorem 14.3 of \cite{Ro92}, which is there shown 
to be (for the case of two graphs) equivalent to 
the definition, formulated in terms of free vector spaces 
and bilinear maps, given on p.\ 298 of \cite{Ro92}. 

\textbf{The $\mathbf{P}$-tensor product of graphs:}  
Assume as given a process $\mathbf{P}$ as above.  
For any given set of graphs $\cal G$ 
(so that the graph ${\mathbf{P}(\cal G})$ is 
uniquely determined), as well as some 
graph $T$ together with a fixed $\mathbf{P}$-morphism 
$\varphi: V \stackrel{\mathbf{P}}{\rightarrow} T$ 
(delivering the homomorphism $\varphi: {\mathbf{P}(\cal G}) \rightarrow T$),  
the pair $(\varphi, T)$ -- or just $T$ when $\varphi$ is 
understood --  is called a $\mathbf{P}$\textit{-tensor product of} $\cal G$ 
when the following holds:  \\ 
(i)  $\varphi$ is surjective, i.e., $\varphi(V) = V_T$; and  \\ 
(ii) if $H$ is any graph and $\delta: V \stackrel{\mathbf{P}}{\rightarrow} H$ 
is any $\mathbf{P}$-morphism, then there exists a homomorphism 
$\delta^\star: T \rightarrow H$ such that $\delta = \delta^\star \circ \varphi$. 

\begin{figure}[ht]
\label{fig:network}
\begin{center}
\begin{tikzpicture}
	\draw [->, thick] (3.7,4.2) -- (6.3,4.2);
	\draw [->, thick] (3.7,4) -- (6.1,2.2);
	\draw [-, thick] (6.5,4.0) -- (6.5,3.8);
	\draw [-, thick] (6.5,3.6) -- (6.5,3.4);
    \draw [-, thick] (6.5,3.2) -- (6.5,3.0);
	\draw [-, thick] (6.5,2.8) -- (6.5,2.6); 
	\draw [->, thick] (6.5,2.4) -- (6.5,2.2);
	\node  [black] at (3.4,4.2) {$V$};
	\node  [black] at (6.8,4.2) {$T$};
	\node  [black] at (5.1,4.45) {$\varphi$};
	\node  [black] at (7,3.2) {$\delta^\star$};
	\node  [black] at (6.4,1.9) {$H$};
	\node  [black] at (4.2,3.2) {$\delta$};
\end{tikzpicture}
\end{center}
\end{figure}
\noindent 
One may well refer to the requirements in this definition as the  
\textit{universal factorization condition of $\delta$ through $T$}.  
These morphisms, and the fact that $\delta = \delta^\star \circ \varphi$, 
are illustrated in the accompanying  commutative diagram.  

It is easy to see that conditions (i) and (ii) of the definition of 
the $\mathbf{P}$-tensor product of graphs are trivially met for each 
set of graphs $\cal G$ and each process $\mathbf P$ by the  
pair $(id_V, {\mathbf{P}(\cal G}))$ (since condition (ii) will then 
be satisfied by choosing $\delta^\star$ as $\delta$); 
this of course includes all the examples 
in the list in the previous section.     
Hence each pair $(id_V, {\mathbf{P}(\cal G}))$ described 
in that list is a $\mathbf{P}$-tensor product 
for the process $\mathbf{P}$ chosen in the example.  
We now proceed to show that this is effectively the only type 
of example of a $\mathbf{P}$-tensor product of graphs, i.e., 
we show that $(id_V, {\mathbf{P}(\cal G}))$ is up to isomorphism the only 
$\mathbf{P}$-tensor product of $\cal G$. 
This will justify us in calling $(id_V, {\mathbf{P}(\cal G}))$ 
the \textit{canonical} $\mathbf{P}$-tensor product of $\cal G$.  

The next lemma uses notation anticipating its later employment. 
\vspace{.165cm}

\begin{lem}\label{lemma2a}  
Consider two graphs $H_1 = (V_1, E_1)$ and $H_2 = (V_2, E_2)$ and  
two homomorphisms $\delta_1^\star: H_1 \rightarrow H_2$ and 
$\delta_2^\star: H_2 \rightarrow H_1$ 
such that $\delta_2^\star \circ \delta_1^\star = id_{V_1}$ 
and $\delta_1^\star \circ \delta_2^\star = id_{V_2}$.  
Then $\delta_1^\star$ and $\delta_2^\star$ are both bijective 
isomorphisms (and inverses of each other), making $H_1$ and $H_2$ 
clones of each other.  
\end{lem}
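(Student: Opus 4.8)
The plan is to split the assertion into a purely set-theoretic part — that $\delta_1^\star$ and $\delta_2^\star$ are mutually inverse bijections — and a graph-theoretic part — that each of them then also reflects adjacency — and to combine the two.

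First I would forget the edges entirely and argue at the level of functions. From $\delta_2^\star \circ \delta_1^\star = id_{V_1}$ one reads off that $\delta_1^\star$ is injective (if $\delta_1^\star(u)=\delta_1^\star(v)$ then $u = \delta_2^\star(\delta_1^\star(u)) = \delta_2^\star(\delta_1^\star(v)) = v$) and that $\delta_2^\star$ is surjective; symmetrically, $\delta_1^\star \circ \delta_2^\star = id_{V_2}$ forces $\delta_2^\star$ injective and $\delta_1^\star$ surjective. Hence both maps are bijections, and since each is both a left and a right inverse of the other, $\delta_2^\star = (\delta_1^\star)^{-1}$ as functions.

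Next I would promote the bijection $\delta_1^\star$ to a graph isomorphism. Adjacency-preservation is already given, $\delta_1^\star$ being a homomorphism: $uv \in E_1$ yields $\delta_1^\star(u)\delta_1^\star(v) \in E_2$, and this is a genuine edge of the simple graph $H_2$ because injectivity gives $\delta_1^\star(u) \neq \delta_1^\star(v)$. The step that actually uses the hypothesis is adjacency-\emph{reflection}: if $\delta_1^\star(u)\delta_1^\star(v) \in E_2$, then applying the homomorphism $\delta_2^\star$ and invoking $\delta_2^\star \circ \delta_1^\star = id_{V_1}$ gives $uv = \delta_2^\star(\delta_1^\star(u))\,\delta_2^\star(\delta_1^\star(v)) \in E_1$ (with $u \neq v$, again by injectivity of $\delta_1^\star$). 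So $\delta_1^\star$ is an adjacency-preserving and adjacency-reflecting bijection, i.e., an isomorphism $H_1 \cong H_2$; the same computation with the indices interchanged shows $\delta_2^\star$ is one too. In the language of Section 1, $H_1$ and $H_2$ are clones of each other.

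I do not anticipate a real obstacle: this is the familiar fact that two mutually inverse structure-preserving maps are isomorphisms. The one point worth keeping in mind is that for simple graphs a bijective homomorphism need not be an isomorphism, so the proof must genuinely exploit that $\delta_2^\star$ is a homomorphism — not merely that $(\delta_1^\star)^{-1}$ exists as a set map — since that is precisely what supplies the reflection step; once that is noticed, the argument is just an unwinding of the two composition identities.
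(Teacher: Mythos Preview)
Your proof is correct and follows essentially the same approach as the paper's: first establish bijectivity from the two composition identities, then obtain adjacency-reflection for $\delta_1^\star$ by applying the homomorphism $\delta_2^\star$ (the paper phrases this contrapositively as preservation of non-adjacency, and argues injectivity and surjectivity by contradiction rather than directly, but these are cosmetic differences).
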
 

\begin{proof}
We first prove that $\delta_1^\star$ and $\delta_2^\star$ 
are injective,  and surjective onto $H_2$ and $H_1$ respectively.  
The fact that $\delta_2^\star \circ \delta_1^\star = id_{V_1}$, 
implies that $\delta_1^\star$ is injective.  
(Suppose not, and that $v_1, v_1' \in V_1$, $v_1 \neq v_1'$, while 
$\delta_1^\star(v_1) = \delta_1^\star(v_1') : = v_2 \in V_2$.  
Then $\delta_2^\star(v_2)$ has to be both $v_1$ and $v_1'$, 
which is impossible.) 
By the symmetry of the conditions on $\delta_1^\star$ and on 
$\delta_2^\star$, $\delta_2^\star$ is also injective.  

Next we show that $\delta_1^\star$ is surjective onto $V_2$.  
Suppose not, and that there is a $v_2 \in V_2$ such that for all 
$v_1 \in V_1$, $\delta_1^\star(v_1) \neq v_2$, while 
$\delta_2^\star(v_2) := v_1' \in V_1$.  
Then $\delta_1^\star(v_1') = (\delta_1^\star \circ \delta_2^\star)(v_2) = v_2$, 
a contradiction.  
Similarly, $\delta_2^\star$ is surjective onto $V_1$.  
So, both $\delta_1^\star$ and $\delta_2^\star$ are bijective homomorphisms.  

The homomorphism $\delta_1^\star$ (and by symmetry also $\delta_2^\star$) 
is an isomorphism, since it preserves not only adjacency, but also non-adjacency.  
Suppose $v_1, v_1' \in V_1$ with $v_1v_1' \not\in E_1$.  
Then $\delta_1^\star(v_1)\delta_1^\star(v_1') \not\in E_2$, for were 
$\delta_1^\star(v_1)\delta_1^\star(v_1') \in E_2$, then (since $\delta_2^\star$ 
is a homomorphism) we would have \\ 
$[(\delta_2^\star\circ\delta_1^\star)(v_1)][(\delta_2^\star\circ\delta_1^\star)(v_1')] 
= v_1v_1' \in E_1$.
\end{proof}
\vspace{.165cm} 

\noindent 
The result of Lemma \ref{lemma2a} is useful in the proof of Theorem \ref{Thm1}.  

We now show, for a given process $\mathbf{P}$ and a given 
set ${\cal G}$ of graphs, that $(id_V, {\mathbf P}({\cal G}))$ is ``up to isomorphism'' 
the only pair $(\varphi , T)$ which has the properties 
required by the definition of a $\mathbf{P}$-tensor product of these graphs.  
The exact meaning of the phrase ``up to isomorphism'' will be strengthened by 
a discussion after the proof of the theorem.   
\vspace{.165cm}  
 
\begin{thm}\label{Thm1}   
$(id_V, {\mathbf P}({\cal G}))$ is (up to isomorphism)  
the unique $\mathbf{P}$-tensor product of ${\cal G}$.    
\end{thm}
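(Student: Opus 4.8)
The plan is to show that \emph{every} $\mathbf{P}$-tensor product $(\varphi, T)$ of $\cal G$ is isomorphic — in a manner compatible with the attached morphisms — to the canonical pair $(id_V, {\mathbf P}({\cal G}))$. Lemma \ref{lemma2a} is tailored to deliver exactly this once we produce a pair of mutually inverse homomorphisms between $T$ and ${\mathbf P}({\cal G})$, so the whole argument reduces to manufacturing such a pair and checking the two composites are identities.

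First I would produce the two homomorphisms by playing the two universal properties against each other. Since $(id_V, {\mathbf P}({\cal G}))$ was already observed to be a $\mathbf{P}$-tensor product and $\varphi : V \stackrel{\mathbf P}{\rightarrow} T$ is a $\mathbf P$-morphism, condition (ii) applied to $(id_V, {\mathbf P}({\cal G}))$ with $H := T$ and $\delta := \varphi$ yields a homomorphism $\delta_1^\star : {\mathbf P}({\cal G}) \rightarrow T$ with $\varphi = \delta_1^\star \circ id_V$; that is, $\delta_1^\star = \varphi$ as a function $V \rightarrow V_T$. In the other direction, $id_V$ is trivially a $\mathbf P$-morphism $V \stackrel{\mathbf P}{\rightarrow} {\mathbf P}({\cal G})$ (it is just the identity homomorphism on ${\mathbf P}({\cal G})$), so condition (ii) for the tensor product $(\varphi, T)$ with $H := {\mathbf P}({\cal G})$ and $\delta := id_V$ gives a homomorphism $\delta_2^\star : T \rightarrow {\mathbf P}({\cal G})$ with $id_V = \delta_2^\star \circ \varphi$.

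Next I would verify that the two composites are identities. From $\delta_2^\star \circ \varphi = id_V$ together with $\delta_1^\star = \varphi$ we get at once $\delta_2^\star \circ \delta_1^\star = id_V = id_{V({\mathbf P}({\cal G}))}$. For $\delta_1^\star \circ \delta_2^\star : V_T \rightarrow V_T$ I would invoke condition (i): given $t \in V_T$, surjectivity of $\varphi$ supplies $v \in V$ with $\varphi(v) = t$, whence $(\delta_1^\star \circ \delta_2^\star)(t) = \delta_1^\star(\delta_2^\star(\varphi(v))) = \delta_1^\star(v) = \varphi(v) = t$, so $\delta_1^\star \circ \delta_2^\star = id_{V_T}$. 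Now Lemma \ref{lemma2a} (with $H_1 := {\mathbf P}({\cal G})$ and $H_2 := T$) shows that $\delta_1^\star$ and $\delta_2^\star$ are mutually inverse isomorphisms, so $T$ is a clone of ${\mathbf P}({\cal G})$; and since $\delta_1^\star = \varphi$ and $\varphi = \delta_1^\star \circ id_V$, this isomorphism carries the canonical morphism $id_V$ onto $\varphi$, which is the precise sense in which $(id_V, {\mathbf P}({\cal G}))$ is \emph{the} $\mathbf P$-tensor product.

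I do not expect a genuine obstacle here: this is the textbook uniqueness-of-tensor-product pattern, and the real content has been front-loaded into Lemma \ref{lemma2a} (that a homomorphism possessing a homomorphic two-sided inverse is an isomorphism, i.e.\ also reflects non-edges). The only points demanding care are the bookkeeping of which universal property is used in each direction, the remark that $id_V$ is itself a $\mathbf P$-morphism, and the observation that condition (i) — surjectivity of $\varphi$ — is exactly what is needed, and all that is needed, to upgrade the one-sided factorization $\delta_2^\star \circ \varphi = id_V$ into the second identity $\delta_1^\star \circ \delta_2^\star = id_{V_T}$.
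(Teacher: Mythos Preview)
Your proposal is correct and follows essentially the same argument as the paper: you produce the two homomorphisms $\delta_1^\star = \varphi^\star$ and $\delta_2^\star = id_V^\star$ by applying the two universal properties against each other, verify the two composites are identities (using surjectivity of $\varphi$ for the $V_T$ side), and then invoke Lemma~\ref{lemma2a}. The only cosmetic differences are notation and the order in which you handle the two directions.
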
 

\begin{proof}
We have already remarked that conditions (i) and (ii) of the definition of 
the $\mathbf{P}$-tensor product of ${\cal G}$ are trivially met 
by the pair $(id_V, {\mathbf P}({\cal G}))$ by choosing $\delta^\star$ as $\delta$.  

To show uniqueness, assume that $(\varphi , T)$ is any $\mathbf{P}$-tensor product of ${\cal G}$. 
We first apply the definition of ``$(\varphi , T)$ is a $\mathbf{P}$-tensor product 
of $\cal G$'' and use in it the graph $\mathbf{P}({\cal G})$ for $H$ 
and the identity map $id_V$ for $\delta$ (which is a $\mathbf{P}$-morphism)  
to conclude that there exists a homomorphism $id_V^\star : T \rightarrow \mathbf{P}({\cal G})$ 
such that 
\begin{equation}
id_V = id_V^\star \circ \varphi.
\end{equation}

On the other hand, since $\varphi : V \rightarrow T$ is a $\mathbf{P}$-morphism 
and $(id_V, {\mathbf P}({\cal G}))$ is a $\mathbf{P}$-tensor 
product of ${\cal G}$, we can apply the definition again (with $T$ for $H$) 
to conclude that there exists a homomorphism $\varphi^\star : \mathbf{P}({\cal G}) \rightarrow T$ 
such that $\varphi = \varphi^\star \circ id_V$; so 
\begin{equation}
\varphi = \varphi^\star
\end{equation}
since $id_V$ is an identity map.  
By equations (1) and (2), it now follows that 
\begin{equation}
id_V = id_V^\star \circ \varphi^\star.
\end{equation}  
  
Now consider any $y \in V_T$.  
Since $\varphi$ is surjective, there exists at least one $x \in V$ with $\varphi(x) = y$.  
Then $id_V^\star (y) \in V$ and, since $id_V$ is an identity map, 
$id_V^\star (y) = id_V(id_V^\star (y))$.  
Furthermore, 
\begin{eqnarray} 
\varphi^\star(id_V^\star(\varphi(x))) & = & \varphi^\star(id_V(x)) \mbox { by (1)}  \nonumber \\
                                         & = & \varphi^\star(x)  \mbox{ since $id_V$ is an identity map} \nonumber \\  
                                         & = & \varphi(x)  \mbox{ by (2),} \nonumber
\end{eqnarray}
so that $\varphi^\star(id_V^\star(y)) = y$, for all $y \in V_T$.  
Hence 
\begin{equation}
id _{V_T} = \varphi^\star \circ id_V^\star.
\end{equation}
It is now clear that $\varphi^\star$ and $id_V^\star$ satisfy the premises 
of Lemma \ref{lemma2a} for $\delta_1^\star$ and $\delta_2^\star$ 
respectively and hence we can conclude that    
$T$ is a clone of ${\mathbf P}({\cal G})$.  
\end{proof}

\noindent 
We now return, as promised, to the phrase ``up to isomorphism''.   
We have shown in the above proof, for any pair 
$(\varphi , T)$ which is a $\mathbf{P}$-tensor product of ${\cal G}$, 
that $T$ is a clone of ${\mathbf P}({\cal G})$, i.e., $T$ looks like 
${\mathbf P}({\cal G})$ ``up to an isomorphism''.  
But more than that is true: 
The action of the $\mathbf{P}$-morphism $\varphi$ is also imitating the action 
of the identity map $id_V$ through equation (1) since $id_V^\star$ 
is an isomorphism, i.e., $\varphi$ (from $V$ to $V_T$) acts like an identity map on 
$V$ ``up to the isomorphism'' $id_V^\star$ (taking $V_T$ back to $V$). 

These results justify that the $\mathbf P$-tensor product 
$(id_V, {\mathbf P}({\cal G}))$ be denoted as 
$\mathbf{\large{\bigcirc\!\!\!\!\! \times}}_{\mathbf P}{\cal G}$.

\section{Making each $\pi_i$ a homomorphism using congruences}

\noindent 
In this section we use the theory of congruences on graphs developed in \cite{BHP15}. 
This development is given there in full detail for simple graphs, while the last section 
is devoted to graphs which have a loop at every vertex, called \textit{loopy} graphs there. 
A \textit{loop-allowing} (hence generally non-simple) graph allows 
loops, but does not (like a loopy graph) prescribe them at every vertex. 
We remark that the definition of a congruence of a loop-allowing graph 
(to follow) is simpler than that of a simple graph and also 
from that of a loopy graph. 

While in the rest of this article all graphs are simple, 
in this section and at the end of the next section only, 
we shall allow 
a graph construction (defined on all loop-allowing 
graphs) which, when applied to a simple graph, 
may yield a loop-allowing graph as a result.  
To describe this construction -- forming the quotient of 
a graph modulo a congruence -- we first  
define the notion of ``congruence'' on a loop-allowing graph.  

{\bf Congruences and quotients:} 
A \textit{congruence on} a loop-allowing graph $G = (V, E)$ is a pair 
$\theta = (\sim, \widehat{E})$ such that \\
(i)  $\sim$ is an equivalence relation on $V$ (hence $id_{V} \subseteq \, \, \sim$);   \\  
(ii) $\widehat{E}$ is a set of unordered pairs of elements 
from $V$ with $E \subseteq \widehat{E}$; and  \\ 
(iii) when $x, y, x', y' \in V$, $x \sim x'$, $y \sim y'$, and 
$xy \in \widehat{E}$, then $x'y' \in \widehat{E}$, 
i.e., $\widehat{E}$ is substitutive with respect to $\sim$.   \\ 
The congruence $\iota_G := (id_{V}, E)$ on $G$ is the smallest congruence on $G$, 
i.e., $\subseteq$-smallest in both components.

Given any congruence $\theta = (\sim, \widehat{E})$ on a (loop-allowing) 
graph $G = (V, E)$, we define a new graph, denoted by $G/\theta$ and called 
\textit{(the quotient of) $G$ modulo} $\theta$, as follows:   \vspace{-2mm}
\begin{eqnarray} 
G / \theta &  : = &  (V_{G/\theta}, E_{G/\theta})  \nonumber  \\ 
           &  :=  &  (V / \sim, \{ [x][y] \mid xy \in \widehat{E} \}),  \nonumber 
\end{eqnarray} 
where $[x] \in V_{G/\theta}$ denotes the $\sim$-equivalence class of $x \in V$.  
   
We note that the surjective mapping $x \mapsto [x]$ from $V_{G}$ onto $V_{G / \theta}$ 
establishes the \textit{natural} or \textit{canonical} homomorphism $G \rightarrow G / \theta$.  
When $\theta = \iota_G$, this natural homomorphism is the mapping $x \mapsto \{ x \}$, $x \in V_G$ 
and it is an isomorphism, i.e., $G \cong G / \iota$.  
Making the distinction between $x$ and $\{ x \}$ in such a case 
is so superficial that we shall not always bother to do so and thus treat 
them as if they are equal; using the phrase 
``identity function'' in the last result in this section is the first 
instance where this remark is applied fully.   

Given any graph homomorphism between loop-allowing graphs, 
say $\varphi : G \rightarrow H$, we define 
a congruence on $G$, denoted by $\theta_\varphi$ and called the 
\textit{congruence induced by} $\varphi$ or the \textit{kernel of} $\varphi$, by \vspace{-2mm}
\begin{eqnarray}
 \theta_\varphi & := & (\sim_\varphi, \widehat{E_\varphi})  \nonumber   \\ 
  & := & (\{ (x,y) \in V_G^2 \mid \varphi(x) = \varphi(y) \},\{ uv \mid u,v \in V_G 
  \mbox{ and } \varphi(u)\varphi(v) \in E_H \}) \nonumber   \\ 
  & =  & (\varphi^{-1}[id_{\varphi(V_G)}], \varphi^{-1}[E(H[\varphi(V_G)])]).   \nonumber \vspace{-2mm}
\end{eqnarray} 
It should be immediately clear that $\theta_\varphi$ is a congruence on $G$.  

Now back to $\mathbf{P}$-products: 
Assume as given the set ${\cal G} = \{ G_i \mid i \in I \}$ of 
loop-allowing graphs with $G_i = (V_i,E_i)$ 
for each $i \in I$ and a process ${\mathbf P}$.  
Consider the $\mathbf{P}$-tensor product ${\mathbf P}({\cal G})$ 
of ${\cal G}$, any $i \in I$, and the \textit{projection} $\pi_i: V \rightarrow V_i$ which, 
for each $f \in V$ maps $f$ to $f(i)$. 
We remark that, for the direct product {\Large$\times$}${\cal G}$, the projection 
$\pi_i$ is a homomorphism $\pi_i :${\Large$\times$}${\cal G} \rightarrow G_i$
for every $i \in I$,  but 
for other products this does not hold in general.  
Can we somehow, for every  ${\mathbf P}$-product, restore each $\pi_i$ 
to its status as a homomorphism by transforming $G_i$ in a uniform way?  
We shall now demonstrate how we may, by taking a suitable quotient graph, 
to be called ${\mathbf P}(G_i)$, indeed reach the conclusion that 
$\pi_i : {\mathbf P}({\cal G}) \rightarrow {\mathbf P}(G_i)$ is a homomorphism.  
Let
\begin{eqnarray} 
\theta_i   &  : = &  (id_{V_i}, \widehat{E_i}), \mbox{ where } \nonumber \\ 
\widehat{E_i} &  : = &   \{ xy \mid x,y \in V_i, \mbox{ and } (xy \in E_i \mbox{ or there exist } 
                    f,g \in V \mbox{ such that }  \nonumber \\  
              &       & \,\,\,\,\,\,\,\,\,\,\,\,\,\,\, fg \in E{\mathbf P}, 
                        f(i) = x,  \mbox{ and } g(i) = y) \}.  \nonumber   
\end{eqnarray}              
It is easy to see that $\theta_i$ is a congruence on $G_i$ for each $i \in I$ 
(according to the above definition).  
With respect to this congruence, we now define the quotient graph 
${\mathbf P}(G_i) : = (V_i, \widehat{E_i}) = G_i / \theta_i$.
Hence ${\mathbf P}(G_i)$ is a well-defined graph -- 
but it is loop-allowing (even if $G_i$ is simple)  
since $\mathbf P$ may be such that $fg \in E{\mathbf P}$ while 
$x = f(i) = g(i) = y$, giving $xx \in \widehat{E_i}$ 
and hence $xx = \{ x \} \{ x \} = [x][x] \in E(G_i / \theta_i)$.   
This is our way to surmount a problem that Hammack, Imrich and Klav\v{z}ar handle in \cite{HIK11} 
by introducing ``weak homomorphisms'' (functions allowed to map adjacent 
vertices to the same vertex) for other than direct products.   
The next result is now trivial.  
\vspace{.165cm}  
 
\begin{lem}\label{ProjHom}  
Every projection $\pi_i : V \rightarrow V_i$ is a homomorphism 
$\pi_i : {\mathbf P}({\cal G}) \rightarrow {\mathbf P}({\cal G}_i)$.  
For the special case of the direct product the allowance of loops in 
$\widehat{E_i}$ can be removed as unnecessary, with $\widehat{E_i} = E_i$,  
$\theta_i$ the identity congruence $\iota_{G_i}$ on $G_i$, 
and ${\mathbf P}(G_i) = G_i$.   
\end{lem}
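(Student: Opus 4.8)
The plan is to read off the homomorphism property directly from the construction of $\theta_i$ and ${\mathbf P}(G_i) = G_i/\theta_i$, and then to specialise the computation to the direct product. Recall first what must be checked: since ${\mathbf P}(G_i)$ is a loop-allowing graph, to say that $\pi_i : {\mathbf P}({\cal G}) \rightarrow {\mathbf P}(G_i)$ is a homomorphism is to say that every edge $fg \in E{\mathbf P}$ is carried by $\pi_i$ to an edge (possibly a loop) of ${\mathbf P}(G_i)$, i.e.\ that $\pi_i(f)\pi_i(g) = f(i)g(i) \in E(G_i/\theta_i)$. So I would fix an arbitrary $fg \in E{\mathbf P}$ and set $x := f(i)$, $y := g(i)$. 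These very vertices $f$ and $g$ are then witnesses, in the second alternative of the definition of $\widehat{E_i}$, that $xy \in \widehat{E_i}$. Since $E(G_i/\theta_i) = \{ [u][v] \mid uv \in \widehat{E_i} \}$ and the equivalence relation of $\theta_i$ is $id_{V_i}$ (so that each class $[u]$ is the singleton $\{u\}$, which we identify with $u$ by the standing convention of this section), it follows that $\pi_i(f)\pi_i(g) = [x][y] \in E({\mathbf P}(G_i))$, as required. Nothing extra is needed when $x = y$: the same witnesses $f,g$ place $xx$ in $\widehat{E_i}$, so the loop at $x$ is present in the loop-allowing graph ${\mathbf P}(G_i)$ to receive the collapsed edge. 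This is the entire first assertion; as the text says, it is trivial, the only points deserving care being the bookkeeping between $\widehat{E_i}$ and the edge set of the quotient, and the fact that we must work with loop-allowing graphs precisely so that $\pi_i$ collapsing an edge to a single vertex still counts as a homomorphism.

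For the second assertion I would take $\mathbf P$ to be the direct product, so $E{\mathbf P} = E_{\times}$, and prove $\widehat{E_i} = E_i$. The inclusion $E_i \subseteq \widehat{E_i}$ is immediate from the first alternative in the definition of $\widehat{E_i}$, so it suffices to show the second alternative contributes nothing new. Suppose $x,y \in V_i$ and there are $f,g \in V$ with $fg \in E_{\times}$, $f(i) = x$, $g(i) = y$. By the defining adjacency rule of the direct product, $fg \in E_{\times}$ means $f(j)g(j) \in E(G_j)$ for \emph{every} $j \in I$; taking $j = i$ gives $xy = f(i)g(i) \in E_i$. Hence $\widehat{E_i} = E_i$, so $\theta_i = (id_{V_i}, E_i) = \iota_{G_i}$, the identity congruence on $G_i$, and ${\mathbf P}(G_i) = G_i/\iota_{G_i} \cong G_i$, which we simply treat as $G_i$ itself. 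Moreover, since $G_i$ is simple, $xy \in E_i$ forces $x \neq y$, so no pair in $\widehat{E_i}$ is a loop and the allowance of loops is indeed superfluous in this case; along the way this re-derives the classical fact that $\pi_i$ is a homomorphism from the direct product onto $G_i$ in the ordinary (loopless) sense.

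There is essentially no obstacle: $\widehat{E_i}$ was manufactured exactly so as to contain every pair $f(i)g(i)$ arising from an edge $fg$ of ${\mathbf P}({\cal G})$, which makes the homomorphism property automatic, and the direct-product simplification is just the observation that then the ``manufactured'' pairs are already genuine edges of $G_i$. The only thing I would be careful to state explicitly in the write-up is the change of category — the general claim lives among loop-allowing graphs, and it is the presence of the loops $xx \in \widehat{E_i}$ that legitimises calling $\pi_i$ a homomorphism when it collapses an edge; the direct product is precisely the case in which this never happens.
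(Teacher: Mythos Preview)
Your proof is correct and is exactly the direct verification from the definitions that the paper has in mind when it declares the result ``trivial'' and states the lemma without any accompanying proof; there is no further argument in the paper to compare against.
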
 
\vspace{.43cm} 

\noindent 
This Lemma immediately entails the next result, 
which is trivial when ${\mathbf P} = ${\Large $\times$}.  
\vspace{.165cm}  

\begin{thm}\label{id=homo}  
For all sets $\cal G$ of graphs and all processes $ {\mathbf P}$ to construct 
their $ {\mathbf P}$-product $ {\mathbf P}({\cal G})$, the 
identity function on $V$  
is a bijective homomorphism from ${\mathbf P}({\cal G})$ to the direct product 
{\Large$\times$}$_{i \in I} {\mathbf P}(G_i)$  of the loop-allowing graphs 
${\mathbf P}(G_i)$, i.e., ${\mathbf P}({\cal G}) \subseteq$ 
{\Large$\times$}$_{i \in I} {\mathbf P}(G_i)$.
\end{thm}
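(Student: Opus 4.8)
The plan is to deduce this theorem directly from Lemma~\ref{ProjHom} (which supplies the projections as homomorphisms) together with the fact that adjacency in a direct product is decided coordinatewise. First I would settle the bookkeeping about vertex sets. Since $\theta_i=(id_{V_i},\widehat{E_i})$ has the identity relation as its first component, every $\sim$-class is a singleton $\{x\}$, and identifying $\{x\}$ with $x$ as agreed in Section~4 we have, exactly as recorded in the text, ${\mathbf P}(G_i)=G_i/\theta_i=(V_i,\widehat{E_i})$. Consequently the vertex set of {\Large$\times$}$_{i\in I}{\mathbf P}(G_i)$ is $\prod_{i\in I}V({\mathbf P}(G_i))=\prod_{i\in I}V_i=V$, so $id_V$ is a well-defined bijection from the vertex set of ${\mathbf P}({\cal G})$ onto the vertex set of {\Large$\times$}$_{i\in I}{\mathbf P}(G_i)$. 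It therefore remains only to verify that $id_V$ preserves edges, i.e.\ that $E{\mathbf P}$ is contained in the edge set of {\Large$\times$}$_{i\in I}{\mathbf P}(G_i)$.

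For the edge-preservation step I would take an arbitrary pair $fg\in E{\mathbf P}$ and fix an arbitrary $i\in I$. By Lemma~\ref{ProjHom} the projection $\pi_i:{\mathbf P}({\cal G})\rightarrow{\mathbf P}(G_i)$ is a homomorphism, so it carries the edge $fg$ to an edge of ${\mathbf P}(G_i)$, namely $\pi_i(f)\pi_i(g)=f(i)g(i)$; in other words $f(i)g(i)\in\widehat{E_i}=E({\mathbf P}(G_i))$. Since $i\in I$ was arbitrary, $f(i)g(i)\in E({\mathbf P}(G_i))$ holds for \emph{every} $i\in I$, and this is precisely the defining condition for $fg$ to be an edge of the direct product {\Large$\times$}$_{i\in I}{\mathbf P}(G_i)$. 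Hence $E{\mathbf P}$ is contained in that edge set, $id_V$ is a bijective homomorphism ${\mathbf P}({\cal G})\rightarrow$ {\Large$\times$}$_{i\in I}{\mathbf P}(G_i)$, and equivalently ${\mathbf P}({\cal G})\subseteq$ {\Large$\times$}$_{i\in I}{\mathbf P}(G_i)$.

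I would close with two remarks. First, nothing about the particular process $\mathbf P$ is used beyond Lemma~\ref{ProjHom}: no ${\mathbf P}$-constraints and no universal constraints are invoked, so the inclusion is valid for every process $\mathbf P$. Second, the argument is unaffected if $fg$ happens to be a loop (when $f=g$): the same line shows $f(i)f(i)\in\widehat{E_i}$ for all $i$, which is exactly why one must work with the loop-allowing graphs ${\mathbf P}(G_i)$ and not with the $G_i$ themselves. There is essentially no obstacle to overcome here; the only point demanding a little care is the identification of the vertex set of {\Large$\times$}$_{i\in I}{\mathbf P}(G_i)$ with $V$ through the singleton $\theta_i$-classes, after which the result is just the definition of the direct product combined with Lemma~\ref{ProjHom} --- which is why the surrounding text rightly calls it an immediate consequence.
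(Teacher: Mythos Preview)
Your proof is correct and follows essentially the same approach as the paper: both argue that for $fg\in E\mathbf{P}$, Lemma~\ref{ProjHom} (together with the definition of $\widehat{E_i}$) yields $f(i)g(i)\in\widehat{E_i}$ for every $i\in I$, which is exactly the adjacency criterion in the direct product of the ${\mathbf P}(G_i)$. You simply make explicit the vertex-set bookkeeping and the role of the singleton $\theta_i$-classes, which the paper leaves implicit.
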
 

\begin{proof} 
If $fg \in E{\mathbf P}$, then, (by the definition of adjacency in 
${\mathbf P}(G_i)$ and Lemma \ref{ProjHom}), $f(i)g(i) \in \widehat{E_i}$ for every 
$i \in I$, and hence $fg$ is an edge of the direct product 
{\Large$\times$}$_{i \in I} {\mathbf P}(G_i)$ of the loop-allowing 
graphs ${\mathbf P}(G_i)$. 
\end{proof}

\section{$\mathbf{P}$-tensor products of homomorphisms}

\noindent 
Beyond the furtive entry of loop-allowing 
graphs in the previous section, we now, 
until near the end of this section, restrict ourselves to simple graphs.  
Remember that the Cartesian product $X := \Pi_{i \in I}X_i$ of sets 
is the set consisting of all functions $f: I \rightarrow \bigsqcup_{i \in I} X_i$ 
satisfying $f(i) \in X_i$ for all $i \in I$ and that  
the $i$'th projection $\pi_i$ is the function 
$\pi_i : X \rightarrow X_i$ with $\pi_i(f) = f(i)$ for each $i \in I$.
Hence if, for a function $f: I \rightarrow \bigsqcup_{i \in I} X_i$, 
the value of $\pi_i(f)$ is given for each $i \in I$, then the function 
is uniquely determined and if, furthermore, this value is in $X_i$ for 
each $i \in I$, then this function $f$ is in the Cartesian product $\Pi_{i \in I}X_i$.

We now assume as given a product process $\mathbf{P}$, a fixed index set $I$,   
two sets of graphs ${\cal G} = \{ G_i \mid i \in I \}$ and ${\cal H} = \{ H_i \mid i \in I \}$,  
as well as a set $\Phi = \{ \varphi_i \mid i \in I \}$ of functions in which, 
for each $i \in I$, $\varphi_i : V(G_i) \rightarrow V(H_i)$. 
For ease of notation we let  
$V_i := V(G_i)$, $W_i := V(H_i)$, for each $i \in I$, and, for the 
Cartesian products of these vertex sets,  
$V := \Pi_{i \in I}V_i$ while $W := \Pi_{i \in I} W_i$.  

Note that, for the given set $\Phi$ of functions, there exists a unique function  
$\varphi : V \rightarrow W$ such that, for each $f \in V$ and each $i \in I$, 
$\pi_i(\varphi(f)) = \varphi_i(\pi_i(f))$;  
this equation can also be written (and will be utilised) as 
$\varphi(f)(i) = \varphi_i(f(i))$, thereby mitigating the venial sin 
of abusing the name $\pi_i$ for two different projections.  
The fact that there is such a function, and its  uniqueness,  
follows from the remark that its domain and function values are completely 
specified by the above defining conditions.  
The function $\varphi$ and its properties are depicted in the accompanying 
commutative diagram.

\begin{figure}[ht]
\label{fig:network}
\begin{center}
\begin{tikzpicture}
	\draw [->, thick] (4.1,4.2) -- (6.1,4.2);
	\draw [->, thick] (6.5,4.0) -- (6.5,2.2);
    \draw [->, thick] (3.7,4.0) -- (3.7,2.2);
    \draw [->, thick] (4.1,2.0) -- (6.1,2.0);
	\node  [black] at (3.4,4.2) {$V$};
	\node  [black] at (3.4,1.9) {$V_i$};
	\node  [black] at (7.0,4.2) {$W$};
	\node  [black] at (5.1,4.4) {$\varphi$};
	\node  [black] at (6.9,3.1) {$\pi_i$};
	\node  [black] at (7.0,1.9) {$W_i$};
	\node  [black] at (3.3,3.1) {$\pi_i$};
	\node  [black] at (5.1,1.7) {$\varphi_i$};
\end{tikzpicture}
\end{center}
\end{figure}

How can we, when each $\varphi_i$ is a homomorphism 
$\varphi_i : G_i \rightarrow H_i$, ensure that such a function $\varphi$ is 
also a homomorphism $\varphi : {\mathbf P}({\cal G}) \rightarrow {\mathbf P}({\cal H})$?  
Let us call a product process $\mathbf{P}$ \textit{hom-preserving} 
if for each index set $I$, every choice of two sets of graphs 
${\cal G}$ and ${\cal H}$, as well as each set $\Phi = \{ \varphi_i \mid i \in I \}$ of
homomorphisms, the function $\varphi$ is a homomorphism 
from ${\mathbf P}({\cal G})$ to ${\mathbf P}({\cal H})$ too. 
It seems reasonable to name the homomorphism $\varphi$ 
resulting in such a way from a hom-preserving process 
$\mathbf P$ the \textit{$\mathbf P$-tensor product of} $\Phi$ 
and write $\varphi = \mathbf{\large{\bigcirc\!\!\!\!\! \times}}_{\mathbf P}\Phi$. 
Section 6 takes this further by linking it to the tensor product 
of graphs associated with the given homomorphisms.  

\textbf{Example:}  
Suppose an index set $I$, two sets of graphs ${\cal G}$ and ${\cal H}$ 
and a set $\Phi$ of homomorphisms as above are given.  
Let $\mathbf{P}$ be the process through which the 
direct product of a set of graphs is formed.  
Then $\mathbf{P}$ is hom-preserving:   \\ 
If $fg \in E_{{\times}({\cal G})}$, the edge set of {\Large$\times$}$_{i \in I} G_i$, 
then $f(i)g(i) \in E(G_i)$ for every $i \in I$.     
Hence, since each $\varphi_i$ is (given as) a homomorphism,  
$\varphi_i(f(i))\varphi_i(g(i)) \in E(H_i)$ for every $i \in I$.  
The defining conditions of $\varphi$ (which do not require 
projections $\pi_i$ to be homomorphisms) allow us to conclude that 
$\varphi(f)(i)\varphi(g)(i) \in E(H_i)$ for every $i \in I$.
But this means that $\varphi(f)\varphi(g) \in E_{{\times}({\cal H})}$, 
the edge set of {\Large$\times$}$_{i \in I} H_i$,  
by the choice of $\mathbf{P}$.  
Hence $\mathbf P \!$  = {\Large$\! \times$} is a hom-preserving process.   

We now generalize the above example to a class of 
constraint-determined product processes. 
\vspace{.165cm}

\begin{thm}\label{Cd is HP} 
Every constraint-determined product processes $\mathbf P$ 
for which one of the $\mathbf P$-constraints is that 
$L(fg) = \emptyset$ for all $fg \in E{\mathbf P}$ 
(briefly: $L = \emptyset$, as for {\raisebox{-0.56mm}{\Large \bf $\Box$}},  
{\Large$\times$}, and 
{\raisebox{-.3ex}{\Large\bf$\Box$}\hspace{-3.9mm}{\Large${\times}$}}) 
is hom-preserving.
\end{thm}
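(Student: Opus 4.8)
The plan is to imitate the direct-product example verbatim, but to replace the phrase "$f(i)g(i)\in E(G_i)$ for every $i\in I$" with the weaker information that the process $\mathbf P$ actually guarantees, namely a valid $JKL$-triple for $fg$. So suppose $\mathbf P$ is constraint-determined with the constraint $L=\emptyset$ among its $\mathbf P$-constraints, let ${\cal G}$, ${\cal H}$ and $\Phi=\{\varphi_i\mid i\in I\}$ be given with each $\varphi_i:G_i\to H_i$ a homomorphism, and let $\varphi:V\to W$ be the induced function with $\varphi(f)(i)=\varphi_i(f(i))$ for all $i$. I would take an arbitrary edge $fg\in E{\mathbf P}$ and aim to show $\varphi(f)\varphi(g)\in E{\mathbf P}$ (this second $E\mathbf P$ being the edge set of ${\mathbf P}({\cal H})$; since $\mathbf P$ is described by the same constraint set $C$ regardless of the input graphs, there is no ambiguity).

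First I would invoke the definition of constraint-determined: since $fg\in E{\mathbf P}$ there is a triple $J(fg),K(fg),L(fg)$ of subsets of $I$ satisfying $C$, and by hypothesis $L(fg)=\emptyset$, so by the universal constraints $J(fg)$ and $K(fg)$ partition $I$ with $J(fg)\neq\emptyset$. Thus for each $i\in I$ we have either $f(i)g(i)\in E(G_i)$ (when $i\in J(fg)$) or $f(i)=g(i)$ (when $i\in K(fg)$). Now I would compute the corresponding index sets for the pair $\varphi(f)\varphi(g)$ in $W$: if $i\in J(fg)$ then $f(i)g(i)\in E(G_i)$, and since $\varphi_i$ is a homomorphism, $\varphi_i(f(i))\varphi_i(g(i))=\varphi(f)(i)\varphi(g)(i)\in E(H_i)$, so $i\in J(\varphi(f)\varphi(g))$; if $i\in K(fg)$ then $f(i)=g(i)$, hence $\varphi(f)(i)=\varphi_i(f(i))=\varphi_i(g(i))=\varphi(g)(i)$, so $i\in K(\varphi(f)\varphi(g))$. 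Therefore the triple $J':=J(fg)$, $K':=K(fg)$, $L':=\emptyset$ serves as a candidate $JKL$-triple for $\varphi(f)\varphi(g)$: it is the same triple, it partitions $I$, it has $J'\neq\emptyset$, and — this is the point — it satisfies exactly the same set-theoretical constraints $C$ that the original triple did, since $C$ is a fixed collection of conditions (emptiness, non-emptiness, cardinality, inclusion, set-difference, and any fixed-structure conditions on $I$) on the sets themselves, and we have handed back literally the same sets. Hence by the constraint-determined characterization applied to ${\mathbf P}({\cal H})$, $\varphi(f)\varphi(g)\in E{\mathbf P}$, and $\varphi$ is a homomorphism ${\mathbf P}({\cal G})\to{\mathbf P}({\cal H})$. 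Since ${\cal G},{\cal H},\Phi$ and $I$ were arbitrary, $\mathbf P$ is hom-preserving.

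The only genuinely delicate point, and the one I would state carefully rather than wave at, is the claim that the triple $(J(fg),K(fg),\emptyset)$ "satisfies $C$ for the pair $\varphi(f)\varphi(g)$" — one must be sure that $C$ constrains only the abstract subsets of $I$ (and any distinguished structure on $I$ such as the order for $\bigcirc$ or the set $D$ for the $D$-product), and never refers to the identities of the vertices $f(i),g(i)$ themselves; this is exactly what the definition of a product process and of $\mathbf P$-constraints guarantees, so reusing the same index sets transfers every constraint intact. Note also that it is essential that we feed back the \emph{same} triple and do not need $L=\emptyset$ to be a consequence — it is assumed outright as one of the constraints, which is what lets us discharge it immediately; for a process whose constraints permitted a nonempty $L$, a homomorphism $\varphi_i$ could map a non-edge, non-equal pair of $G_i$ either to an edge or to an equal pair of $H_i$, and the membership $i\in L(fg)$ need not be preserved, so the argument would break — which is precisely why the hypothesis is needed. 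Finally one should remark (or it follows from Lemma~\ref{lemma1}) that $\Box$, {\Large$\times$}, and {\raisebox{-.3ex}{\Large\bf$\Box$}\hspace{-3.9mm}{\Large${\times}$}} all have $L=\emptyset$ among their constraints, so the theorem indeed subsumes the direct-product example and covers the Cartesian and strong products as well.
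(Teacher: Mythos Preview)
Your proof is correct and follows essentially the same route as the paper's: take an edge $fg$, use the hypothesis $L(fg)=\emptyset$ to split $I=J(fg)\sqcup K(fg)$, push $J$-indices through via the homomorphism property of each $\varphi_i$ and $K$-indices through via the fact that each $\varphi_i$ is a function, and conclude that the same $JKL$-triple witnesses $\varphi(f)\varphi(g)\in E_{{\mathbf P}({\cal H})}$. The paper phrases the last step as the equalities $J(fg)=J(\varphi(f)\varphi(g))$ and $K(fg)=K(\varphi(f)\varphi(g))$ (which your two containments, together with disjointness and $J\cup K=I$, in fact force), but the substance of the argument is identical.
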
 

\begin{proof}
Consider a constraint-determined product processes $\mathbf P$ 
and the function $\varphi: V \rightarrow W$ as 
determined by $\Phi = \{ \varphi_i \mid i \in I \}$, 
$\varphi_i : G_i \rightarrow H_i$.  
We need to prove that $\varphi : {\mathbf P}({\cal G}) \rightarrow {\mathbf P}({\cal H})$. 
Consider all $fg \in E_{{\mathbf P}({\cal G})}$.  
This is equivalent to considering all the triples 
$J(fg), K(fg), L(fg)$ of subsets of $I$ satisfying the $\mathbf P$-constraints -- 
which determine $ E_{{\mathbf P}({\cal G})}$. 
We now assume that always $L(fg) = \emptyset$, and hence 
$I = J(fg) \sqcup K(fg)$ for every $fg \in E{\mathbf P}$.  
Pick any $fg \in E_{{\mathbf P}({\cal G})}$, $f,g \in V$.  
If $j \in J(fg)$, then $f(j)g(j) \in E(G_j)$ and 
(since $\varphi_j : G_j \rightarrow H_j$) 
$\varphi_j(f(j))\varphi_j(g(j)) = \varphi(f)(j)\varphi(g)(j)\in E(H_j)$, 
ensuring that $j \in J(\varphi(f)\varphi(g))$.  
And if $k \in K(fg)$, $f(k) = g(k)$ in  $V(G_k)$ and hence  
(since $\varphi_k : V(G_k) \rightarrow V(H_k)$ is a function) 
$\varphi_k(f(k)) = \varphi_k(g(k))$, i.e. 
$\varphi(f)(k) = \varphi(g)(k)$ in $V(H_k)$, 
ensuring that $k \in K(\varphi(f)\varphi(g))$.  
This means that the two index sets $J(fg)$ and $K(fg)$   
are respectively equal to the two index sets $J(\varphi(f)\varphi(g))$ 
and $K(\varphi(f)\varphi(g))$ determining $\varphi(f)\varphi(g) 
\in E_{{\mathbf P}({\cal H})}$.

This confirms that $\varphi$ is a homomorphism and thus that 
$\mathbf P$ is hom-preserving:  $\varphi =  
\mathbf{\large{\bigcirc\!\!\!\!\! \times}}_{\mathbf P}\Phi
: {\mathbf P}({\cal G}) \rightarrow {\mathbf P}({\cal H})$.
\end{proof}

When $\mathbf P$ is constraint-determined with the index set 
$L(fg)$ empty for all $fg \in E{\mathbf P}$, 
and hence hom-preserving with $\varphi =  
\mathbf{\large{\bigcirc\!\!\!\!\! \times}}_{\mathbf P}\Phi$ 
indeed a homomorphism, we may link the commutative square of 
this section to the procedure in the previous section 
of transforming each $G_i$ to ${\mathbf P}(G_i) = G_i / \theta_i$ (and each 
$H_i$ to ${\mathbf P}(H_i) = H_i / \theta_i$) in order to make each projection 
$\pi_i$ a homomorphism.  
(So far in this section $\pi_i$ need not be a homomorphism at all.) 
Consider the commutative square of functions in which $\varphi$ 
and the two projections $\pi_i$ are now homomorphisms. 
It gives satisfaction that this is indeed now a 
commutative square of homomorphisms, as we now show.

\begin{figure}[ht]
\label{fig:network}
\begin{center}
\begin{tikzpicture}
	\draw [->, thick] (4.1,4.2) -- (6.1,4.2);
	\draw [->, thick] (6.5,4.0) -- (6.5,2.2);
    \draw [->, thick] (3.7,4.0) -- (3.7,2.2);
    \draw [->, thick] (4.1,2.0) -- (6.1,2.0);
	\node  [black] at (3.3,4.2) {${\mathbf P}({\cal G})$};
	\node  [black] at (3.4,1.9) {${\mathbf P}(G_i)$};
	\node  [black] at (7.0,4.2) {${\mathbf P}({\cal H})$};
	\node  [black] at (5.1,4.4) {$\varphi$};
	\node  [black] at (6.9,3.1) {$\pi_i$};
	\node  [black] at (7.0,1.9) {${\mathbf P}(H_i)$};
	\node  [black] at (3.3,3.1) {$\pi_i$};
	\node  [black] at (5.1,1.7) {$\varphi_i$};
\end{tikzpicture}
\end{center}
\end{figure}
\noindent 

\begin{lem}\label{Lemma 6}  
$\varphi_i : {\mathbf P}(G_i) \rightarrow {\mathbf P}(H_i)$.   
\end{lem}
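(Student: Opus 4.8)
The plan is to show that $\varphi_i$, as a map from $V_i$ to $W_i$, preserves the edges of the quotient graph ${\mathbf P}(G_i) = G_i/\theta_i$, i.e.\ maps each edge of $\widehat{E_i}$ into $\widehat{E_i'}$, where I write $\widehat{E_i'}$ for the analogous edge set on $H_i$ defining ${\mathbf P}(H_i) = H_i/\theta_i$. So first I would take an arbitrary edge $xy \in \widehat{E_i}$ with $x,y \in V_i$ and unpack the definition of $\widehat{E_i}$: either (a) $xy \in E_i = E(G_i)$, or (b) there exist $f,g \in V$ with $fg \in E{\mathbf P}$, $f(i) = x$, and $g(i) = y$. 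I must show in either case that $\varphi_i(x)\varphi_i(y) \in \widehat{E_i'}$.

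In case (a), since $\varphi_i : G_i \rightarrow H_i$ is a homomorphism, $xy \in E(G_i)$ gives $\varphi_i(x)\varphi_i(y) \in E(H_i)$, which by the definition of $\widehat{E_i'}$ immediately lies in $\widehat{E_i'}$. In case (b), I have $f,g \in V$ with $fg \in E{\mathbf P}$, $f(i) = x$, $g(i) = y$. Here I invoke Theorem \ref{Cd is HP} (applicable because $\mathbf P$ is, in the situation of this section, constraint-determined with $L = \emptyset$, so hom-preserving): the function $\varphi : V \rightarrow W$ determined by $\Phi$ is a homomorphism ${\mathbf P}({\cal G}) \rightarrow {\mathbf P}({\cal H})$, so $\varphi(f)\varphi(g) \in E{\mathbf P}$ (the edge set of ${\mathbf P}({\cal H})$). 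Now $\varphi(f), \varphi(g) \in W$ are vertices of the $\mathbf P$-product of $\cal H$ with $\varphi(f)(i) = \varphi_i(f(i)) = \varphi_i(x)$ and $\varphi(g)(i) = \varphi_i(g(i)) = \varphi_i(y)$, by the defining equation $\varphi(f)(i) = \varphi_i(f(i))$. So $\varphi(f)\varphi(g) \in E{\mathbf P}$ witnesses clause (b) of the definition of $\widehat{E_i'}$ with the pair $\varphi_i(x), \varphi_i(y)$, whence $\varphi_i(x)\varphi_i(y) \in \widehat{E_i'}$. In both cases $\varphi_i$ sends an edge of ${\mathbf P}(G_i)$ to an edge of ${\mathbf P}(H_i)$, so $\varphi_i : {\mathbf P}(G_i) \rightarrow {\mathbf P}(H_i)$ is a homomorphism, which also confirms that the displayed square of homomorphisms commutes.

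One small technical point I would be careful about: the edges of $G_i/\theta_i$ are strictly speaking pairs of $\sim$-classes $[x][y]$, but since $\theta_i$ has $\sim \, = id_{V_i}$ each class is a singleton and we identify $[x]$ with $x$, exactly as the paper's convention on $G/\iota$ permits; so reasoning directly with $\widehat{E_i}$ as a set of unordered pairs from $V_i$ is legitimate, including the case of loops $xx$. The main (and only real) obstacle is recognizing that case (b) is precisely where the hom-preserving property from Theorem \ref{Cd is HP} must be applied — case (a) is a triviality, but without the earlier theorem there would be no way to propagate a $\mathbf P$-edge of ${\mathbf P}({\cal G})$ to a $\mathbf P$-edge of ${\mathbf P}({\cal H})$, which is exactly what clause (b) of $\widehat{E_i'}$ demands.
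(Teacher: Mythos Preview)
Your proof is correct and follows essentially the same two-case decomposition as the paper's. The only minor difference is in case (b): you invoke the \emph{conclusion} of Theorem~\ref{Cd is HP} (that $\varphi$ is a homomorphism, so $\varphi(f)\varphi(g)\in E_{{\mathbf P}({\cal H})}$) and use this edge as a witness for clause~(b) of $\widehat{E_i'}$, whereas the paper re-uses the internal observation from that theorem's proof that $J(fg)=J(\varphi(f)\varphi(g))$ to obtain $\varphi_i(x)\varphi_i(y)\in E(H_i)\subseteq\widehat{E_i}(H_i)$ directly---a cosmetic variation, not a different strategy.
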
 

\begin{proof} 
We know that $\varphi_i : G_i \rightarrow H_i$.  
Suppose that $xy \in \widehat{E_i}(G_i)$, then there are two 
possibilities:  \\ 
(i) $xy \in E_i(G_i)$ and, since $\varphi_i : G_i \rightarrow H_i$, we have 
$\varphi_i(x)\varphi_i(y) \in E_i(H_i) \subseteq \widehat{E_i}(H_i)$; or   \\ 
(ii) there exist $f,g \in V_{{\mathbf P}({\cal G})}$ such that 
$fg \in E_{{\mathbf P}({\cal G})}$, $f(i) = x$, $g(i) = y$ and $i \in J(fg)$.  
As in the proof (for $\mathbf P$) of Theorem \ref{Cd is HP}, it is clear that 
$J(fg) = J(\varphi(f)\varphi(g))$ and $K(fg) = K(\varphi(f)\varphi(g))$ 
(and $L$ always empty).  
Hence $i \in J(\varphi(f)\varphi(g))$ and $\varphi(f)(i)\varphi(g)(i) = 
\varphi_i(f(i))\varphi_i(g(i)) = \varphi_i(x)\varphi_i(y) \in \widehat{E_i}(H_i)$.  
\end{proof}

\section{Graphs, products, and homomorphisms intertwined}

\noindent 
We start this section by constructing a graph from 
any homomorphism.  
Consider a homomorphism   $\varphi : G \rightarrow H$. 
We define a graph $\Gamma(\varphi)$, called \textit{the graph of} $\varphi$, 
by stipulating that  \\
$V_{\Gamma(\varphi)} : = \varphi = \{ (x, \varphi(x)) \mid x \in V_G \}$  
$(\subseteq V_G \times V_H)$; while  \\   
$E_{\Gamma(\varphi)} : = \{ (x,\varphi(x))(y, \varphi(y)) \mid x,y \in V_G   
\mbox{ and } \varphi(x)\varphi(y) \in E_H \}$. 

Note that $\Gamma(\varphi)$ is a simple graph.  
Also, in terms of the two projection mappings $\pi_1: (x, \varphi(x)) \mapsto x$ 
and $\pi_2: (x, \varphi(x)) \mapsto \varphi(x)$, the above definition ensures that  
$\pi_1$ is surjective and $\pi_2$ is a homomorphism  
$\Gamma(\varphi) \rightarrow H$.  
Furthermore, it is clear that of all the information in the 
configuration $\varphi : G \rightarrow H$, the set-theoretical structure 
$\Gamma(\varphi)$ encodes $V_G$, $\varphi$, and $H[\varphi(V_G)]$, but that $E_G$ and 
the rest of $H$ is irretrievable from $\Gamma(\varphi)$.  
   
The assumptions for our last result are now stipulated.   
We have a fixed index set $I$ and two $I$-indexed sets of graphs ${\cal G} = 
\{ G_i \mid i \in I \}$ and ${\cal H} = \{ H_i \mid i \in I \}$.    
$\Phi = \{ \varphi_i \mid i \in I \}$ is a set of homomorphisms 
$\varphi_i : G_i \rightarrow H_i$, while $\mathbf P$ is a constraint-determined 
product process with $L = \emptyset$ as one of the $\mathbf P$-constraints.  
Theorem \ref{Cd is HP} in the previous section then assures us that 
$\varphi = \mathbf{\large{\bigcirc\!\!\!\!\! \times}}_{\mathbf P}\Phi : 
{\mathbf P}({\cal G}) \rightarrow {\mathbf P}({\cal H})$, 
i.e., $\mathbf P$ is hom-preserving.  
We use exactly the same notation as in the previous section for this framework.    
\vspace{.165cm} 

\begin{thm}\label{Final} 
If the configuration $I, {\cal G}, {\cal H}, \Phi, {\mathbf P}$ 
satisfies the conditions just stipulated, then   \\
$\Gamma(\mathbf{\large{\bigcirc\!\!\!\!\! \times}}_{\mathbf P}
\{ \varphi_i \mid i \in I \})  \cong 
{\mathbf P}(\{ \Gamma(\varphi_i) \mid i \in I \})$.   
\end{thm}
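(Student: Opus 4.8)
The strategy is to write down an explicit bijection between the vertex sets of the two graphs and then check that it preserves and reflects adjacency, the adjacency check being run through the triples of index subsets $J,K,L$ exactly as in the proof of Theorem \ref{Cd is HP}.

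Write $\varphi := \mathbf{\large{\bigcirc\!\!\!\!\! \times}}_{\mathbf P}\{\varphi_i \mid i \in I\}$; by Theorem \ref{Cd is HP} this is a homomorphism $\varphi : {\mathbf P}({\cal G}) \to {\mathbf P}({\cal H})$ with $\varphi(f)(i) = \varphi_i(f(i))$ for all $f \in V$ and $i \in I$. Recall that $V_{\Gamma(\varphi)} = \{ (f,\varphi(f)) \mid f \in V \}$, while a vertex of ${\mathbf P}(\{\Gamma(\varphi_i) \mid i \in I\})$ is a function $h$ on $I$ with $h(i) \in V_{\Gamma(\varphi_i)} = \{ (x,\varphi_i(x)) \mid x \in V_i \}$ for each $i$, so that $h(i)$ equals $(f_i,\varphi_i(f_i))$ for a unique $f_i \in V_i$. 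First I would define
\[ \Psi : V_{\Gamma(\varphi)} \to V_{{\mathbf P}(\{\Gamma(\varphi_i) \mid i \in I\})}, \qquad \Psi\big((f,\varphi(f))\big) := h_f, \mbox{ where } h_f(i) := (f(i),\varphi_i(f(i))). \]
Here $h_f$ is a legitimate vertex since $f(i) \in V_i$ for all $i$, and $\Psi$ is a bijection because each of the two vertex sets is in canonical bijection with $V = \Pi_{i \in I} V_i$ -- the one by $(f,\varphi(f)) \mapsto f$, the other by sending $h$ to the function whose value at $i$ is the first coordinate of $h(i)$, using the fact recalled at the start of Section 5 that an element of a Cartesian product is determined by its projections -- and $\Psi$ is the composite of these two bijections; in effect $\Psi$ acts as $id_V$ read through the identifications.

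The substance of the argument is to show that $\Psi$ is an isomorphism, i.e.\ that for all $f,g \in V$, $\varphi(f)\varphi(g) \in E({\mathbf P}({\cal H}))$ iff $h_f h_g \in E({\mathbf P}(\{\Gamma(\varphi_i) \mid i \in I\}))$. Since $\mathbf P$ is constraint-determined, each side is decided by whether the corresponding triple of subsets of $I$ satisfies the $\mathbf P$-constraints $C$. For the right-hand pair, $i \in J(h_f h_g)$ iff $h_f(i)h_g(i) \in E_{\Gamma(\varphi_i)}$ iff $\varphi_i(f(i))\varphi_i(g(i)) \in E(H_i)$, so $J(h_f h_g) = J(\varphi(f)\varphi(g))$; likewise $i \in K(h_f h_g)$ iff $h_f(i) = h_g(i)$ iff $f(i) = g(i)$; and since $L = \emptyset$ is one of the $\mathbf P$-constraints, the third index set of any triple satisfying $C$ is forced. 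Running exactly the bookkeeping of the $J$- and $K$-index sets (and the emptiness of $L$) from the proof of Theorem \ref{Cd is HP}, I would identify the triple attached to $\{h_f,h_g\}$ with the triple attached to $\{\varphi(f),\varphi(g)\}$ in ${\mathbf P}({\cal H})$, whence one satisfies $C$ iff the other does; this is the desired equivalence, and the theorem follows.

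The step I expect to be the main obstacle is precisely this last equivalence. The $J$-parts of the two triples coincide at once, but the $K$-part of the triple for $\{h_f,h_g\}$ records where $f$ and $g$ literally agree, whereas adjacency of $\varphi(f)$ and $\varphi(g)$ in ${\mathbf P}({\cal H})$ is governed by where the images $\varphi_i(f(i))$ and $\varphi_i(g(i))$ agree; reconciling these two descriptions -- so that satisfaction of $C$ on one side genuinely forces it on the other -- is where the hypothesis $L = \emptyset$ and the analysis already carried out in Theorem \ref{Cd is HP} have to be brought to bear with care, the possible non-injectivity of the $\varphi_i$ being what makes this delicate. Once the two index-set triples are shown to match, preservation and reflection of edges is immediate from $\mathbf P$ being constraint-determined, and nothing further is required.
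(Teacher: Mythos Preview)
Your approach is precisely the paper's: the bijection you call $\Psi$ is the paper's $\alpha$ (with your $h_f$ its $f^\star$), and both arguments establish $J\{\varphi(f),\varphi(g)\} = J\{h_f,h_g\}$ and then try to pass to equality of the $K$-sets via the constraint $L=\emptyset$. You have, however, correctly isolated the point at which the argument breaks --- and it does break, not merely require care. One has only the inclusion
\[
K\{h_f,h_g\} = \{k : f(k)=g(k)\} \ \subseteq\ \{k : \varphi_k(f(k))=\varphi_k(g(k))\} = K\{\varphi(f),\varphi(g)\},
\]
and when some $\varphi_k$ is non-injective the inclusion can be strict. The constraint $L=\emptyset$ is a condition for \emph{membership} in $E\mathbf P$, not an identity holding for arbitrary pairs; so from $L\{\varphi(f),\varphi(g)\}=\emptyset$ one cannot infer $L\{h_f,h_g\}=\emptyset$. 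The paper simply asserts the equality of the $K$-sets at this step; your hesitation is warranted.

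In fact the statement is false as written. Take $\mathbf P=\,${\raisebox{-0.56mm}{\Large \bf $\Box$}} and $I=\{1,2\}$; let $G_1=H_1=K_2$ on $\{a,b\}$ with $\varphi_1=id$, and let $G_2=\overline{K_2}$ on $\{c,d\}$, $H_2=K_1$ on $\{z\}$, with $\varphi_2(c)=\varphi_2(d)=z$. Then $H_1\,\Box\,H_2\cong K_2$, and every pair of vertices of $\Gamma(\varphi)$ whose first coordinates differ in the $G_1$-slot has adjacent images, so $\Gamma(\varphi)\cong K_{2,2}$. On the other side $\Gamma(\varphi_1)\cong K_2$ and $\Gamma(\varphi_2)\cong\overline{K_2}$ (since $H_2$ has no edge), whence $\Gamma(\varphi_1)\,\Box\,\Gamma(\varphi_2)\cong 2K_2$. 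Thus $\Gamma(\varphi)\not\cong\mathbf P(\{\Gamma(\varphi_i)\})$. The obstacle you flagged --- non-injectivity of the $\varphi_i$ --- is exactly what produces the failure, and no amount of bookkeeping from Theorem~\ref{Cd is HP} will repair it.
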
 

\begin{proof} 
The vertex set of the graph $\Gamma(\mathbf{\large{\bigcirc\!\!\!\!\! \times}}_{\mathbf P}
\{ \varphi_i \mid i \in I \}) = \Gamma(\varphi)$ 
is the function $\varphi : V \rightarrow W$,  
and the vertex set of the graph ${\mathbf P}(\{ \Gamma(\varphi_i) \mid i \in I \})$ 
is the Cartesian product of the vertex sets of the graphs $\Gamma(\varphi_i)$, i.e., 
$\Pi_{i \in I}\varphi_i$.  
Consider the mapping $\alpha : \varphi \rightarrow \Pi_{i \in I}\varphi_i$ 
defined by $(f,\varphi(f)) \mapsto f^\star$ for every $ (f,\varphi(f)) \in \varphi$, where 
$f^\star : I \rightarrow \bigsqcup_{i \in I} \varphi_i$ with   
$$f^\star(i) : = (f(i), \varphi(f)(i)) = (f(i), \varphi_i(f(i)))\in \varphi_i 
\mbox{ for every } i \in I.$$

The function $\alpha$ is easily seen to be a bijection:  \\  
$\bullet$  If $(f, \varphi(f)) \neq (g, \varphi(g))$, then  
$f \neq g$ and hence $f(i) \neq g(i)$ for some $i \in I$.  
But then $f^\star \neq g^\star$.   \\  
$\bullet$  Given any function $h \in \Pi_{i \in I}\varphi_i$, 
one can describe a function $f$ for which $(f, \varphi(f)) \in \varphi$ 
and $f^\star = h$ by using the defining equation above backwards.   

Now consider any two vertices $(f,\varphi(f)), (g,\varphi(g)) 
\in \varphi = V_{\Gamma(\varphi)}$.  
By the definition of adjacency of vertices in $\Gamma(\varphi)$, 
$$(f,\varphi(f))(g,\varphi(g)) \in E_{\Gamma(\varphi)} \mbox{ iff } 
\varphi(f)\varphi(g) \in E({\mathbf P}(\{ H_i \mid i \in I \})). $$ 
With respect to the product ${\mathbf P}(\{ H_i \mid i \in I \})$, 
for every $j \in I$, 
\begin{eqnarray}
j \in J\{ \varphi(f),\varphi(g) \} & \mbox{ iff } &  \varphi(f)(j)\varphi(g)(j)  \in E(H_j) \nonumber \\ 
                    &   \mbox{ iff }  &   \varphi_j(f(j))\varphi_j(g(j)) \in E(H_j)  \nonumber  \\ 
        &   \mbox{ iff }  & (f(j),\varphi_j(f(j)))(g(j),\varphi_j(g(j))) \in E(\Gamma(\varphi_j))  \nonumber  \\ 
        &   \mbox{ iff }  &   f^\star(j)g^\star(j) \in E(\Gamma(\varphi_j))  \nonumber  \\ 
                    &   \mbox{ iff }  &   j \in J\{ f^\star, g^\star \},  \nonumber  
\end{eqnarray} 
the latter with respect to the product ${\mathbf P}(\{ \Gamma(\varphi_i) \mid i \in I \})$.  
So, as subsets of $I$, $J\{ \varphi(f),\varphi(g) \}  = J\{ f^\star, g^\star \}$.  
According to the assumed properties of $\mathbf P$, 
in particular since $L = \emptyset$, then also 
$K\{ \varphi(f),\varphi(g) \}  = K\{ f^\star, g^\star \}$, 
and hence $\varphi(f)\varphi(g) \in E({\mathbf P}(\{ H_i \mid i \in I \}))$ 
iff $f^\star g^\star \in E({\mathbf P}(\{ \Gamma(\varphi_i) \mid i \in I \}))$.  
From what was said previously it follows that 
$(f,\varphi(f))(g,\varphi(g)) \in E_{\Gamma(\varphi)}$ iff 
$f^\star g^\star \in E({\mathbf P}(\{ \Gamma(\varphi_i) \mid i \in I \}))$.  
 
Hence $\alpha$ is an isomorphism and  
$\Gamma(\varphi) \cong {\mathbf P}(\{ \Gamma(\varphi_i) \mid i \in I \})$ 
now follows.  
\end{proof}

\section{The analogy to linear algebra}

\noindent 
We now return to redeem an early promise. 
In what sense could we claim that ``$\mathbf{P}$-morphisms in graph theory 
are analogous to multilinear mappings in linear algebra''? 
To clarify the analogy we display the correspondences in two 
parallel columns:   \\[2mm]
\begin{tabular}{p{6.5cm}}
\textbf{Graph Theory}  \\[0.8mm]  
We have a set ${\cal G} = \{ G_i \}_{i \in I}$ of graphs $G_i$ 
and form the Cartesian product $V = \Pi_{i \in I} V_i$ 
of the underlying sets $V_i$ of (vertices of) the graphs $G_i$. 
One may consider arbitrary graphs $H$ and arbitrary  
$\mathbf{P}$-morphisms $\delta : V \stackrel{\mathbf{P}}{\rightarrow} H$. 
\end{tabular} 
\hfill 
\begin{tabular}{p{6.5cm}}  \\[-7.1mm]
\textbf{Linear Algebra}  \\[0.8mm] 
We have a set ${\cal G} = \{ G_i \}_{i \in I}$ of vector spaces $G_i$ 
and form the Cartesian product $V = \Pi_{i \in I} V_i$ 
of the underlying sets $V_i$ of (vectors of) the vector spaces $G_i$. 
One may consider arbitrary vector spaces $H$ and arbitrary  
multilinear mappings $\delta : V \rightarrow H$. 
\end{tabular}

\noindent
\begin{tabular}{p{6.5cm}}
The canonical $\mathbf{P}$-tensor product of $\cal G$ is the pair 
$(id_V, {\mathbf P}({\cal G}))$ in which $id_V$ is the fixed identity 
$id_V: V \stackrel{\mathbf P}{\rightarrow}{\mathbf P}({\cal G})$, 
satisfying the defining tensor conditions:  \\   
(i) $id_V$ is surjective; and   \\  
(ii) if $H$ is any graph and $\delta : V \stackrel{\mathbf{P}}{\rightarrow} H$ 
any $\mathbf{P}$-morphism, then the function $\delta$ 
itself is a graph homomorphism which we 
 may write as 
$\delta^\star: {\mathbf P}({\cal G}) \rightarrow H$, -- and, 
of course, $\delta = \delta^\star \circ id_V$. 
\end{tabular} 
\hfill 
\begin{tabular}{p{6.5cm}}  \\[-6.3mm]
A tensor product of $\cal G$ is a pair 
$(\varphi, T)$ in which $\varphi$ is a fixed multilinear mapping  
from $V$ to the vector space $T$, 
satisfying the defining tensor conditions:  \\ 
(i) $\varphi(V)$ spans $T$; and    \\  
(ii) if $H$ is any vector space and $\delta : V \rightarrow H$ 
any multilinear mapping, then there exists a linear mapping 
(i.e., vector space homomorphism) 
$\delta^\star: T \rightarrow H$ such that $\delta = \delta^\star \circ \varphi$. 
\end{tabular}

\section{Acknowledgement}  

\noindent
Izak Broere is supported in part by the National Research 
Foundation of South Africa (Grant Number 90841).


\end{document}